%
%
%
%
%

\documentclass[11pt]{article}
\usepackage{amssymb}
\usepackage{amsbsy}
\usepackage[latin1]{inputenc}
\usepackage{amsthm}
\usepackage[dvips]{graphicx}
\usepackage{graphicx} 
\usepackage{subfigure}
\usepackage{pst-eucl}
\usepackage[latin1]{inputenc}
\usepackage[english]{babel}
\usepackage{amsmath,amssymb,graphics,mathrsfs}
\usepackage{amsmath,amssymb,latexsym,amsfonts}
\usepackage{graphicx,color}
\usepackage[T1]{fontenc}
\usepackage[active]{srcltx}
\usepackage{multicol}
\usepackage[latin1]{inputenc}
\usepackage{pst-all}
\usepackage{enumerate}
\usepackage{pstricks}
\usepackage{pstricks-add}
\usepackage{setspace}
\usepackage{soul}
\usepackage{cancel}
\usepackage{nonfloat}
\usepackage[margin=10pt,font=footnotesize,labelfont=bf,labelsep=endash]{caption}
\usepackage[left=4cm,top=3cm,right=2.4cm,bottom=3.2cm]{geometry}
\parindent=0pt

\usepackage[colorlinks=true,citecolor=red,linkcolor=blue,urlcolor=RubineRed,pdfpagetransition=Blinds,pdftoolbar=false,pdfmenubar=false]{hyperref}


\newtheorem{definition}{Definition}
\newtheorem{theorem}{Theorem}
\newtheorem{corol}{Corollary}
\newtheorem{lemma}{Lemma}
\newtheorem{remark}{Remark}

\hoffset -0.5cm
\oddsidemargin 0.8cm                         
\topmargin -.5cm 
\textheight 21.8cm 
\textwidth 16.5cm

\begin{document}
\title{Optimal bilinear control problem related to a chemo-repulsion system in 2D domains}
\author{F. Guill\'en-Gonz\'alez$^1$, E. Mallea-Zepeda$^2$, M.A. Rodr\'iguez-Bellido$^3$}
\date{\small$^{1,3}$\it Dpto. Ecuaciones Diferenciales y An\'alisis Num\'erico and IMUS\, Universidad de Sevilla, Sevilla, Spain\\
\small$^2$\it Departamento de Matem\'atica, Universidad de Tarapac\'a, Arica, Chile}
\maketitle
\footnotetext{$^1$ E-mail: {\tt guillen@us.es}}
\footnotetext{$^2$E-mail:{\tt emallea@uta.cl}}
\footnotetext{$^3$E-mail: {\tt angeles@us.es}}
\date{}

\begin{abstract}
In this paper we study a bilinear optimal control problem associated to a chemo-repulsion model with linear production term in a bidimensional domain. The existence, uniqueness and regularity of strong solutions of this model are deduced, proving the existence of an global optimal solution.  Afterwards, we derive  first-order optimality conditions by using a Lagrange multipliers theorem. 
\medskip

\noindent\textbf{Keywords:} Chemorepulsion-production model, strong solutions, bilinear control, optimality conditions.
\medskip

\noindent\textbf{ 2010 Mathematics Subject Classification:} 35K51, 35Q92, 49J20, 49K20

\end{abstract}

\section{Introduction}
In biology, the  chemotaxis phenomenon is understood as the movement of living organisms induced by 
the presence of certain chemical substances.
In 1970 Keller and Segel \cite{keller-segel} proposed a mathematical model that describes 
chemotactic aggregation of cellular slime molds which move preferentially
towards relatively high concentrations of a chemical substance secreted by the amoebae
themselves. Such phenomenon  is called chemoattraction with production. In contrast, if regions of high chemical concentration generate a repulsive effect on the organisms,
the phenomenon is called chemorepulsion.

We are interested in studying a chemorepulsion model  given by the 
following system of partial differential equations
\begin{equation}\label{system}
\left\{
\begin{array}{rcl}
\partial_tu-\Delta u&=&\nabla\cdot(u\nabla v)\quad \mbox{in $(0,T)\times\Omega\equiv Q$,}\\
\partial_tv-\Delta v+v&=&h(u)\quad \mbox{ in $(0,T) \times\Omega\equiv Q$,}\\
u(0,x)&=&u_0(x),\ v(0,x)=v_0(x)\quad \mbox{ in $\Omega$,}\\
\dfrac{\partial u}{\partial{\bf n}}&=&0,\ \dfrac{\partial v}{\partial{\bf n}}=0\quad \mbox{ on $(0,T)\times\partial\Omega$,}
\end{array}
\right.
\end{equation}
where $\Omega\subset\mathbb{R}^2$, is a bounded domain with smooth boundary $\partial\Omega$, ${\bf n}$ denotes the outward unit 
normal vector to $\partial\Omega$ and $(0,T)$ is a time interval. The unknowns are  cell density $u(t,x)\ge 0$ and chemical concentration $v(t,x)\ge 0$.
The function $h(u)$ represents the production term, which must be nonnegative when $u\ge0$.

System (\ref{system}), when the production term is linear, that is $h(u)=u$, was studied by Cieslak et al in \cite{cieslak}. The authors, based on
the abstract theory for quasilinear parabolic problems (see \cite{amann}), proved the global existence
and uniqueness of smooth classical solution in 2D domains, and global existence of weak solutions in spaces of  dimension 3 and 4.
Tao \cite{tao}, in a bounded convex domain $\Omega\subset\mathbb{R}^n$ ($n\ge 3$), studies system (\ref{system}) with $h(u)=u$ and a modification
in the density-dependent chemotactic sensitivity function, that is, the term $\nabla\cdot(u\nabla v)$ is changed by $\nabla\cdot(g(u)\nabla v)$, where
$$
g\in C^2([0,+\infty]),\ g(0)=0,\  0<g(u)\le C(u+1)^\alpha\  \mbox{ for all }\ u>0 
$$
with some $C>0$  and  $\alpha>0.$ The author prove that, under assumptions of initial data $0\not\equiv u_0\in C^0(\overline{\Omega})$ and $v_0\in C^1(\overline{\Omega})$
are nonnegative and that $\alpha<\frac{4}{n+2}$, there exists a unique global in time classical solution of (\ref{system}) and the corresponding solution $(u,v)$ converges to
$(\overline{u}_0,\overline{u}_0)$ as time goes to $+\infty$, where $\overline{u}_0:=\frac{1}{|\Omega|}\int_{\Omega}u_0$.

In this work we study a control problem subject to this chemorepulsion with linear production model in which a bilinear control acts injecting
or extracting chemical substance on a subdomain of control $\Omega_c\subset\Omega$.
Specifically, we consider $\Omega\subset\mathbb{R}^2$ be a 
bounded domain of class $C^2$, then we study a control problem associated  to the following system in $Q:=(0,T)\times\Omega$, 

\begin{equation}\label{eq1}
\left\{\begin{array}{rcl}
\partial_tu-\Delta u&=&\nabla\cdot(u\nabla v),\\
\partial_tv-\Delta v+v&=&u+f v,\\ 
\end{array}
\right.
\end{equation}
with initial conditions 
\begin{equation}\label{eq2}
u(0,x)=u_0(x)\ge0,\ v(0,x)=v_0(x)\ge0\ \mbox{ in }\Omega,
\end{equation}
and boundary conditions 
\begin{equation}\label{eq3}
\dfrac{\partial u}{\partial{\bf n}}=0,\quad \dfrac{\partial v}{\partial{\bf n}}=0\ \mbox{ on }(0,T)\times\partial\Omega.
\end{equation}
Here, the function $f$ 
denotes a bilinear control that acts on chemical concentration, which lies in a closed convex set $\mathcal{F}$. We observe that in the subdomains where $f\ge0$
we inject chemical  substance, and conversely  where $f\le 0$ we extract chemical substance. There is a wide collection of publications dealing
with optimal control of PDEs. See, for example, \cite{alekseev,casas_kunisch,karl_whachsmuth,kiem_rosch,kunisch_trautman,elva_elder,diego_elder,tachim,wang-1,zheng}
and the references therein.
In all previous publications, the control variable enters the state equation either on the right-hand side (distributed controls) or is part of the boundary conditions
(boundary controls). As far as we know, the literature related to optimal control problems with bilinear control is scarce, see
\cite{borzi_park,fister_mccarthy,kroner_vexler,vallejos_borzi}. The main difficulty is that the solution of the state equation depends nonlinearly on the control and state variables
(see  the second equation in (\ref{eq1})).

In the context of optimal control problems associated to  chemotaxis models, the literature is also scarce, see \cite{dearaujo,fister_mccarthy, rodriguez_rueda,ryu_yagi,ryu}.
In \cite{dearaujo} the authors study a distributed optimal control for a two-dimensional  model of cancer invasion. Using the 
Leray-Schauder fixed point theorem, they prove the existence of weak solutions of state system. Also, they prove the existence of optimal control and derive an optimality system.
The works \cite{fister_mccarthy} and \cite{ryu} delimit their study to a one-dimensional domain. In \cite{fister_mccarthy} 
two extreme problems  on a chemoattractant  model are analyzed; one involves harvesting the actual cells and the other depicts removing a proportion of the chemical substance. 
The control is bilinear (total) and acts on a portion of the cells or chemical substance. 
They prove the existence of optimal solutions and derive an optimality system. Also, they design a numerical scheme for the optimality system and present some examples.
In the problem studied in \cite{ryu}, the control acts on the boundary conditions for the chemical substance. The existence of optimal solutions is proved.
In  the recent work \cite{rodriguez_rueda}, the authors analyze a distributive optimal control problem where the state equations are given
by a stationary chemotaxis model coupled with the Navier-Stokes equations (chemotaxis-fluid system). They prove the existence
of an optimal solution. In addition, they derive an optimality system through a penalty method, because the relation control-state is multivalued. Finally,
in \cite{ryu_yagi}, on a 2D domain, the authors study a problem in which the control variable is  distributed, and acts on the equation for the chemical substance. 
They prove the existence of optimal solutions. Furthermore, using the fact that the state is differentiable with respect to the control, they derive an optimality system.
Other studies related to controllability for the nonstationary Keller-Segel system and nonstationary chemotaxis-fluid model can be consulted in
\cite{chaves_guerrero_1} and \cite{chaves_guerrero_2}, respectively.

The outline of this paper is as follows:
In Section \ref{sec:2}, we fix the notation, introduce the functional spaces to be used, give the definition of strong solution for
system (\ref{eq1})-(\ref{eq3}) and we state a parabolic regularity result that will be used throughout this work. In Section
\ref{sec:Existence}, we prove the existence (and uniqueness) of strong solution of (\ref{eq1})-(\ref{eq3}) using the Leray-Schauder fixed point theorem.
In Section \ref{sec:4}, we establish the optimal  control problem,  proving the existence of an optimal solution
and we obtain the first-order optimality conditions  
based on a Lagrange multipliers theorem  in Banach spaces.
Finally, we obtain  a regularity result  for Lagrange multipliers.

\section{Preliminaries}
\label{sec:2}
In order to establish the control problem, we will introduce some notations. We will use the Lebesgue space
$L^p(\Omega)$, $1\le p\le+\infty$, with norm denoted by $\|\cdot\|_{L^p}$. In particular, the $L^2(\Omega)$ norm and its 
inner product  will denoted by $\|\cdot\|$ and $(\cdot,\cdot)$, respectively. We consider the usual Sobolev spaces
$W^{m,p}(\Omega)=\{u\in L^p(\Omega)\,:\, \|\partial^\alpha u\|_{L^p}<+\infty,\ \forall |\alpha|\le m\}$, 
with norm denoted by $\|\cdot\|_{W^{m,p}}$. When $p=2$, we write $H^m(\Omega):=W^{m,2}(\Omega)$ and we denote the respective norm by
$\|\cdot\|_{H^m}$. Also, we use the space $W_{\bf n}^{m,p}(\Omega)=\{u\in W^{m,p}(\Omega)\,:\, \frac{\partial u}{\partial{\bf n}}=0\mbox{ on }\partial\Omega\}$ ($m>1+1/p$), with norm denoted
by $\|\cdot\|_{W^{m,p}_{\bf n}}$. If $X$ is a Banach space, we denote by $L^p(0,T;X)$ 
the space of valued functions in $X$ defined on the interval
$[0,T]$ that are integrable in the Bochner sense, and its norm will be denoted by $\|\cdot\|_{L^p(X)}$. For  simplicity we denote
$L^p(Q):=L^p(0,T;L^p(\Omega))$ 
 if $p\not= +\infty$ and its norm by $\|\cdot\|_{L^p(Q)}$. 

In the case $p=+\infty$, $L^{\infty}(Q)$ means $L^{\infty}((0,T)\times \Omega)$, and its norm is denoted by $\|\cdot\|_{L^{\infty}(Q)}$.
Also, we denote by $C([0,T];X)$ the space of continuous functions from $[0,T]$ into a Banach space
$X$, and its norm by $\|\cdot\|_{C(X)}$.
The  topological dual space of a Banach space $X$ will be denoted by $X'$,
and the duality for a pair $X$ and $X'$ by $\langle\cdot,\cdot\rangle_{X'}$ or simply
by $\langle\cdot,\cdot\rangle$ unless this leads to ambiguity. Moreover, the letters $C$, ${K}$, $C_1$, ${K}_1,...,$ are positive constants,
independent of state $(u,v)$ and control $f$,  but its value may change from line to line.

We are interested in the study of a control problem associated to strong solutions of  system (\ref{eq1})-(\ref{eq3}). In the following definition we give the concept  of strong solution of (\ref{eq1})-(\ref{eq3}).
\begin{definition}\label{strong}
Let $f\in L^4(Q)$, $u_0\in H^1(\Omega)$, $v_0\in W^{3/2,4}_{{\bf n}}(\Omega)$ with $u_0\ge 0$ and $v_0\ge 0$ a.e.~in $\Omega$, a pair $(u,v)$
is called strong solution of problem (\ref{eq1})-(\ref{eq3}) in $(0,T)$, if $u\ge0$ and $v\ge 0$ in $Q$,
\begin{eqnarray}
&&u\in\mathcal{Y}_u:=\{u\in L^\infty(0,T;H^1(\Omega))\cap L^2(0,T;H^2_{\bf n}(\Omega)),\ \partial_tu\in L^2(Q)\},\label{st-1}\\
&&v\in \mathcal{Y}_v:=\{v\in L^\infty(0,T;W^{3/2,4}_{\bf n}(\Omega))\cap L^4(0,T;W_{\bf n}^{2,4}(\Omega)),\ \partial_tv\in L^4(Q)\},\label{st-11}
\end{eqnarray}
the system (\ref{eq1}) hold pointwisely a.e.~$(t,x)\in Q$,
\begin{eqnarray}
\partial_tu-\Delta u&=&\nabla\cdot(u\nabla v),\label{st-2}\\
\partial_tv-\Delta v+v&=&u+fv,\label{st-3}
\end{eqnarray}
and the boundary and initial conditions (\ref{eq2}) and (\ref{eq3}) are satisfied, respectively.
\end{definition}

\begin{remark}
The problem (\ref{eq1})-(\ref{eq3}) is conservative in $u$. In fact, integrating  (\ref{eq1})$_1$ in $\Omega$ we have
\begin{equation}\label{comp_u}
\frac{d}{dt}\left(\int_\Omega u\right)=0,\ \mbox{ i.e. }\ \int_\Omega u(t)=\int_\Omega u_0:=m_0,\ \forall t>0. 
\end{equation}
Also, integrating  (\ref{eq1})$_2$ in $\Omega$ we deduce 
\begin{equation}\label{comp_v}
\frac{d}{dt}\left(\int_\Omega v\right)+\int_\Omega v= m_0+\int_\Omega fv.
\end{equation}
\end{remark}
We define the space ${\widehat{W}^{2-2/p,p}}(\Omega)$ as follows
\begin{equation}\label{besov-4}
\widehat{W}^{2-2/p,p}(\Omega)=
\left\{\begin{array}{rcl}
W^{2-2/p,p}(\Omega)&\mbox{if}& p<3,\\
W^{2-2/p,p}_{\bf n}(\Omega)&\mbox{if}&p>3.
\end{array}
\right.
\end{equation}

In order to study the  existence of solution of system (\ref{eq1})-(\ref{eq3}), we will use the following  regularity result for the heat equation
(see  \cite{feireisl}, p. 344).

\begin{lemma}\label{feireisl}
For

$\Omega \in \mathcal{C}^2$, let
$1<p<+\infty$ 
 ($p\not=3$)
and  $g\in L^p(Q)$, $u_0\in\widehat{W}^{2-2/p,p}(\Omega)$. Then the problem
\begin{equation*}
\left\{
\begin{array}{rcl}
\partial_tu-\Delta u&=&g \mbox{ in }Q,\\
u(0,x)&=&u_0(x)\mbox{ in }\Omega,\\
\dfrac{\partial u}{\partial{\bf n}}&=&0\mbox{ on }(0,T)\times\partial\Omega,
\end{array}
\right.
\end{equation*}
admits a unique solution $u$ in the class
\begin{equation*}
u\in C([0,T];\widehat{W}^{2-2/p,p}(\Omega))\cap L^p(0,T;W^{2,p}(\Omega)),\ \partial_tu\in L^p(Q).
\end{equation*}
Moreover, there exists a positive constant $C=C(p,\Omega,T)$ such that
\begin{equation}\label{des-regularity}
\|u\|_{C(\widehat{W}^{2-2/p,p})}+\|\partial_tu\|_{L^p(Q)}+\|u\|_{L^p(W^{2,p})}
\le C(\|g\|_{L^p(Q)}+\|u_0\|_{\widehat{W}^{2-2/p,p}}).
\end{equation}

In particular, the equation $\partial_tu-\Delta u=g$ is pointwisely satisfied  a.e.~in $Q$. 

\end{lemma}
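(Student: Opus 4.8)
\emph{Sketch of proof.} The plan is to read the statement as a particular instance of standard maximal $L^p$-regularity for the Neumann realization of the Laplacian, so I only describe the structure. First I would set $A_pu:=-\Delta u$ on the base space $L^p(\Omega)$, with domain $D(A_p):=W^{2,p}_{\bf n}(\Omega)=\{w\in W^{2,p}(\Omega):\partial w/\partial{\bf n}=0\text{ on }\partial\Omega\}$, the normal trace being well defined because $p>1$ and $\partial\Omega\in C^2$. Since $\Omega$ is of class $C^2$, it is classical that $A_p+I$ is a positive sectorial operator on $L^p(\Omega)$ generating a bounded analytic semigroup, that it has bounded imaginary powers (equivalently, is $R$-sectorial), and hence — by the Dore--Venni / Weis theory of maximal regularity (see, e.g., \cite{amann}) — that it enjoys maximal $L^p$-regularity on the finite interval $(0,T)$; the harmless shift by $I$ can then be removed since $T<\infty$.

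Granting this, the abstract maximal regularity theorem yields, for every $g\in L^p(0,T;L^p(\Omega))=L^p(Q)$ and every $u_0$ in the trace space $X_p:=\big(L^p(\Omega),D(A_p)\big)_{1-1/p,\,p}$, a unique solution of $\partial_tu+A_pu=g$, $u(0)=u_0$, with $\partial_tu\in L^p(Q)$, $u\in L^p(0,T;W^{2,p}(\Omega))$ and the quantitative bound (\ref{des-regularity}); moreover the embedding $W^{1,p}(0,T;L^p(\Omega))\cap L^p(0,T;D(A_p))\hookrightarrow C([0,T];X_p)$ provides the continuity in time. It then remains only to identify $X_p$ with $\widehat{W}^{2-2/p,p}(\Omega)$ as defined in (\ref{besov-4}).

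For this I would use the known description of the interpolation spaces between $L^p(\Omega)$ and the domain of the Neumann Laplacian: $\big(L^p(\Omega),W^{2,p}_{\bf n}(\Omega)\big)_{\theta,p}$ equals $W^{2\theta,p}(\Omega)$ when $2\theta<1+1/p$ and $W^{2\theta,p}_{\bf n}(\Omega)$ when $2\theta>1+1/p$ — the threshold $1+1/p$ being exactly the regularity above which the Neumann condition is a meaningful trace. With $\theta=1-1/p$ one has $2\theta=2-2/p$, and $2-2/p<1+1/p\Leftrightarrow p<3$ whereas $2-2/p>1+1/p\Leftrightarrow p>3$; hence $X_p=W^{2-2/p,p}(\Omega)=\widehat{W}^{2-2/p,p}(\Omega)$ for $p<3$ and $X_p=W^{2-2/p,p}_{\bf n}(\Omega)=\widehat{W}^{2-2/p,p}(\Omega)$ for $p>3$, which also explains why $p=3$ must be excluded: there $2\theta=1+1/p$ exactly and the interpolation space is not a plain Sobolev--Slobodeckij space. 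Finally, since $u\in L^p(0,T;W^{2,p}(\Omega))$ and $\partial_tu\in L^p(Q)$, both members of $\partial_tu-\Delta u=g$ belong to $L^p(Q)$ and coincide there, i.e.\ the equation holds pointwise a.e.\ in $Q$.

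The abstract part is routine, so the main obstacle is the trace-space identification, and in particular the careful handling of the borderline exponent $p=3$, where the Neumann condition neither fully persists in nor fully drops out of the interpolation space; stating the interpolation identity correctly (with the threshold $1+1/p$, not $1$) is the delicate bookkeeping step. A secondary point to verify is that $C^2$-regularity of $\partial\Omega$ already suffices for both the sectoriality / maximal regularity of $A_p$ and the interpolation characterization — it does, but the references should be pinned down precisely.
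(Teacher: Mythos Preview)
Your sketch is correct and is precisely the standard maximal $L^p$-regularity argument underlying this result. Note, however, that the paper does not prove Lemma~\ref{feireisl} at all: it simply quotes it as a known result from \cite[p.~344]{feireisl} (Feireisl--Novotn\'y), so there is no ``paper's own proof'' to compare against. Your outline --- sectoriality of the Neumann Laplacian on $L^p(\Omega)$, maximal regularity via $R$-sectoriality/bounded imaginary powers, and identification of the trace space $(L^p,D(A_p))_{1-1/p,p}$ with $\widehat{W}^{2-2/p,p}(\Omega)$ through the threshold $2-2/p\lessgtr 1+1/p$ --- is exactly how such statements are established in the references behind \cite{feireisl}, and your explanation of why $p=3$ is excluded is the right one.
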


\begin{remark}
In the case of $p=3$, one concludes that $u\in C([0,T];X_{3,3})\cap L^3(0,T;W^{2,3}(\Omega))$, 
$\partial_tu\in L^3(Q)$, for a certain space $X_{3,3}$ (see \cite[Theorem 10.22]{feireisl}) whose description  is not evident in terms of $\widehat{W}^{2-2/p,p}(\Omega)$ or another Sobolev space.
\end{remark}

Thorough this paper, we will use the following equivalent norms in $H^1(\Omega)$ and $H^2(\Omega)$,   respectively (see \cite{necas} for details):
\begin{eqnarray}
\|u\|^2_{H^1}&\simeq&\|\nabla u\|^2+\left(\int_\Omega u\right)^2,\quad\forall\, u\in H^1(\Omega),\label{equi}\\
\|u\|^2_{H^2}&\simeq&  \Vert \Delta u \Vert^2+ \left(
\displaystyle\int_{\Omega} u 
\right)^2, \quad\forall u\in H^2_{\bf n}(\Omega),\label{norma-1}
\end{eqnarray}
and the classical interpolation  inequality in $2D$ domains
\begin{equation}\label{interpol}
\|u\|_{L^4}\le C\|u\|^{1/2}\|u\|^{1/2}_{H^1},\quad \forall\, u\in H^1(\Omega).
\end{equation}

\section{Existence and Uniqueness of Strong Solution of System  (\ref{eq1})-(\ref{eq3})}
\label{sec:Existence}

In this section we will prove the existence (and uniqueness) of solution of (\ref{eq1})-(\ref{eq3}) using the Leray-Schauder fixed point theorem.
Specifically we will prove the following result:

\begin{theorem}\label{strong_solution}
Let $u_0\in H^{1}(\Omega)$, $v_0\in  W^{3/2,4}_{{\bf n}}(\Omega)$ with $u_0\ge 0$ and $v_0\ge 0$ in $\Omega$, and $f\in L^{4}(Q)$. 
There exists a unique strong solution $(u,v)$ of system (\ref{eq1})-(\ref{eq3}) in sense of Definition \ref{strong}. Moreover, there exists a positive constant
$$
\mathcal{K}_1:=\mathcal{K}_1(m_0,T,\|u_0\|_{H^1},\|v_0\|_{W^{3/2,4}_{\bf n}},\|f\|_{L^4(Q)}),
$$ 
such that
\begin{equation}\label{bound_solution}
\|\partial_tu,\partial_tv\|_{L^2(Q)\times L^4(Q)}+\|u,v\|_{C(H^1\times W^{3/2,4}_{\bf n})}+\|u\|_{L^2(H^2)}+\|v\|_{L^4(W^{2,4})}\le \mathcal{K}_1.
\end{equation}
\end{theorem}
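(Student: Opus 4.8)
The plan is to combine a priori estimates that exploit the dissipative (chemo-repulsion) structure of (\ref{eq1}) with a Leray--Schauder fixed point argument, and then to bootstrap up to the strong-solution regularity by means of Lemma~\ref{feireisl}. Apart from mass conservation (\ref{comp_u}), the crucial estimate is obtained by testing (\ref{st-3}) by $-\Delta v$ and (\ref{st-2}) by $\log u$ (after a standard regularization of the logarithm) and adding: the two zeroth-order chemotactic terms $\pm(u,\Delta v)$ cancel \emph{exactly because of the repulsive sign} in $\nabla\cdot(u\nabla v)$, which leaves
\begin{equation*}
\frac{d}{dt}\Bigl(\int_\Omega u\log u+\tfrac12\|\nabla v\|^2\Bigr)+4\|\nabla\sqrt{u}\|^2+\|\Delta v\|^2+\|\nabla v\|^2=-(fv,\Delta v).
\end{equation*}
Bounding $|(fv,\Delta v)|\le\tfrac14\|\Delta v\|^2+C\|f\|_{L^4}^2\|v\|_{L^4}^2$ by (\ref{interpol}) and controlling $\|v\|$ via (\ref{comp_v}), Gronwall yields $u$ bounded in $L^\infty(0,T;L\log L)$, $\sqrt{u}$ in $L^2(0,T;H^1)$ and $v$ in $L^\infty(0,T;H^1)\cap L^2(0,T;H^2)$, with constants of the type of $\mathcal{K}_1$.

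\textbf{Bootstrap.} Testing (\ref{st-2}) by $u$ gives $\tfrac12\tfrac{d}{dt}\|u\|^2+\|\nabla u\|^2=\tfrac12(\Delta v,u^2)$, and since $\|\Delta v\|\in L^2(0,T)$, (\ref{interpol}) and Gronwall give $u\in L^\infty(0,T;L^2)\cap L^2(0,T;H^1)$ and hence $u\in L^4(Q)$. Then the right-hand side $u+fv-v$ of (\ref{st-3}) lies in $L^p(Q)$ for every $p<4$, so Lemma~\ref{feireisl} with some $p\in(2,3)$ (note $v_0\in W^{3/2,4}_{\bf n}\hookrightarrow\widehat{W}^{2-2/p,p}(\Omega)$) gives $v\in C([0,T];\widehat{W}^{2-2/p,p})\hookrightarrow C([0,T];L^\infty)$, i.e.\ $v\in L^\infty(Q)$; consequently $u+fv-v\in L^4(Q)$ and Lemma~\ref{feireisl} with $p=4$ yields $v\in\mathcal{Y}_v$. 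A final energy estimate testing (\ref{st-2}) by $-\Delta u$ and using $v\in\mathcal{Y}_v$ (equivalently, checking $\nabla\cdot(u\nabla v)\in L^2(Q)$ and applying Lemma~\ref{feireisl} with $p=2$, since $u_0\in H^1=\widehat{W}^{1,2}$) gives $u\in\mathcal{Y}_u$; collecting all constants yields (\ref{bound_solution}).

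\textbf{Existence via Leray--Schauder.} Define $T\colon\bar u\mapsto u$ by first solving the \emph{linear} problem $\partial_tv-\Delta v+v-fv=\bar u$, $v(0)=v_0$, for $v$ (well posed, e.g.\ by Galerkin, since $|(fv,v)|\le\varepsilon\|\nabla v\|^2+C\|f\|_{L^4}^2\|v\|^2$), and then the \emph{linear} problem $\partial_tu-\Delta u=\nabla\cdot(u\nabla v)$, $u(0)=u_0$, for $u$. A fixed point $u=T(u)$ is a strong solution; moreover $u=\sigma\,T(u)$, $\sigma\in[0,1]$, is precisely the system (\ref{st-2})--(\ref{st-3}) with $u(0)$ replaced by $\sigma u_0$, so the above cancellation is untouched by $\sigma$ and the a priori estimates hold uniformly in $\sigma$. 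Compactness and continuity of $T$ follow from the regularity gain of the two linear problems plus the Aubin--Lions lemma, and Leray--Schauder applies. Nonnegativity is then obtained a posteriori: testing (\ref{st-2}) by $-u^{-}$, with $u^{-}=\max\{-u,0\}$, gives $\tfrac12\tfrac{d}{dt}\|u^{-}\|^2+\|\nabla u^{-}\|^2=\tfrac12(\Delta v,(u^{-})^2)$, so $u^{-}(0)=0$ and $\|\Delta v\|\in L^2(0,T)$ force $u\ge0$; then, knowing $u\ge0$, testing (\ref{st-3}) by $-v^{-}$ and using $(u,v^{-})\ge0$ gives $v\ge0$.

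\textbf{Uniqueness and main obstacle.} For two strong solutions, set $u=u_1-u_2$, $v=v_1-v_2$ (note $\int_\Omega u=0$); testing the difference equations by $u$, by $v$ and by $-\Delta v$, the only delicate terms $(u\nabla v_1,\nabla u)$ and $(u_2\nabla v,\nabla u)$ are absorbed through (\ref{interpol}) together with $\nabla v_1\in L^4(Q)$ and $u_2\in L^2(0,T;L^\infty)$ (both available since the solutions are \emph{strong}), leaving Gronwall coefficients in $L^1(0,T)$; hence $u\equiv v\equiv0$. The main difficulty is the bootstrap step: upgrading the energy-level bounds to the spaces $\mathcal{Y}_u$, $\mathcal{Y}_v$ requires, in particular, reaching $v\in L^\infty(Q)$ and $u\in L^4(Q)$ in order to apply Lemma~\ref{feireisl} with $p=4$, and making $\nabla\cdot(u\nabla v)$ belong to $L^2(Q)$; in dimension two this is possible but requires a careful iteration of parabolic estimates together with the interpolation inequality (\ref{interpol}).
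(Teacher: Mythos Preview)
Your outline follows essentially the same strategy as the paper: Leray--Schauder fixed point, the entropy/energy identity combining a logarithmic test on (\ref{st-2}) with the $-\Delta v$ test on (\ref{st-3}), the bootstrap through Lemma~\ref{feireisl} (first to $v\in L^\infty(Q)$, then to $v\in\mathcal{Y}_v$ with $p=4$), and finally $-\Delta u$ for $u\in\mathcal{Y}_u$. The uniqueness argument is also the same in spirit.

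There is, however, a logical ordering issue. You postpone nonnegativity to ``a posteriori'', but the $\log u$ test you use for the \emph{a priori} bound on the Leray--Schauder family $u=\sigma T(u)$ already presupposes $u\ge0$; a ``standard regularization of the logarithm'' does not recover the exact cancellation $\pm(u,\Delta v)$ unless $u$ is known to be nonnegative (because the primitive of $s\phi'(s)$ equals $s$ only for $\phi=\log$). The paper resolves this in two ways you should adopt: first, it builds positivity into the fixed-point map by replacing $(\bar u,\bar v)$ by $(\bar u_+,\bar v_+)$ on the right-hand sides, so that Step~1 (test by $u_-$, $v_-$) gives $u,v\ge0$ \emph{before} any energy estimate; second, it tests with $\ln(u+1)$ rather than $\ln u$, which is well defined once $u\ge0$ and leaves only the harmless remainder $\int\frac{1}{u+1}\nabla u\cdot\nabla v$ instead of exact cancellation. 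Without this reordering your a priori bounds for the $\sigma$-family are not justified.

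A secondary difference: your map is one-component, $\bar u\mapsto u$, and keeps the bilinear term $fv$ and the transport term $u\nabla v$ \emph{inside} the linear problems to be solved. The paper instead freezes both $\bar u$ and $\bar v$ (with positive parts), obtaining a genuinely decoupled pair of heat equations for which Lemma~\ref{feireisl} gives well-definedness and compactness of $R$ directly; this avoids the extra Galerkin/well-posedness argument your setup requires for the $v$-equation with the variable coefficient $f$. Both routes work, but the paper's is cleaner and also makes Step~1 immediate. Finally, the paper's intermediate bootstrap for $v$ uses $p=7/2$ (since $fv\in L^{7/2}(Q)$ once $v\in L^\infty(H^1)$), whereas you take $p\in(2,3)$; both land $v$ in $L^\infty(Q)$ and then allow $p=4$, so this is only a cosmetic difference.
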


\subsection{Existence}\label{existence}

Let us  introduce the ``weak'' spaces
\begin{equation}\label{pf}
\mathcal{X}_u:= L^\infty(0,T;L^2(\Omega))\cap L^{8/3}(0,T;W^{1,8/3}(\Omega))\quad \mbox{ and }\quad
\mathcal{X}_v:= C^0([0,T];C(\overline{\Omega}))
\end{equation}

We define the operator
$R:\mathcal{X}_u\times \mathcal{X}_v\rightarrow \mathcal{Y}_u\times\mathcal{Y}_v\hookrightarrow \mathcal{X}_u\times \mathcal{X}_v$
by $R(\bar{u},\bar{v})=(u,v)$ the solution of the decoupled  linear problem
\begin{equation}\label{pf-1}
\left\{
\begin{array}{rcl}
\partial_tu-\Delta u&=&\nabla\cdot(\bar{u}_+\nabla v),\\
\partial_tv-\Delta v+v&=&\bar{u}_++f\bar{v}_+,\\
u(0)&=&u_0,\ v(0)=v_0,\\
\dfrac{\partial u}{\partial{\bf n}}&=&0,\ \dfrac{\partial v}{\partial{\bf n}}=0,
\end{array}
\right.
\end{equation}
where $\bar{u}_+:=\max\{\bar{u},0\}\ge0$, $\bar{v}_+:=\max\{\bar{v},0\}\ge0$. In fact, first we find $v$ and after $u$. 

In the following lemmas we will prove the hypotheses  of Leray-Schauder fixed point theorem.
\begin{lemma}\label{compact}
The operator $R:\mathcal{X}_u\times\mathcal{X}_v\rightarrow \mathcal{X}_u\times\mathcal{X}_v$ is well defined and compact.
\end{lemma}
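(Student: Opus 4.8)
The plan is to establish that $R$ is well defined, continuous, and maps bounded sets into relatively compact sets (indeed, it is compact since it factors through the compact embedding $\mathcal{Y}_u\times\mathcal{Y}_v\hookrightarrow\mathcal{X}_u\times\mathcal{X}_v$). The key is that the decoupled problem \eqref{pf-1} is solved in two stages: first solve the $v$-equation, then the $u$-equation, exploiting the truncations $\bar u_+,\bar v_+$ to get the required integrability of the right-hand sides from the (low) regularity of data $(\bar u,\bar v)\in\mathcal{X}_u\times\mathcal{X}_v$.

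First I would treat the $v$-equation $\partial_t v-\Delta v+v=\bar u_++f\bar v_+$. Since $\bar v\in\mathcal{X}_v=C^0([0,T];C(\overline\Omega))$, we have $\bar v_+\in L^\infty(Q)$ and hence $f\bar v_+\in L^4(Q)$ (because $f\in L^4(Q)$); since $\bar u\in\mathcal{X}_u\hookrightarrow L^{8/3}(0,T;W^{1,8/3}(\Omega))\subset L^{8/3}(0,T;L^\infty(\Omega))$ (by Sobolev embedding in 2D, as $8/3>2$) and also $\bar u\in L^\infty(0,T;L^2)$, interpolation gives $\bar u_+\in L^4(Q)$ — in any case the right-hand side lies in $L^4(Q)$. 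With $v_0\in W^{3/2,4}_{\bf n}(\Omega)=\widehat W^{2-2/4,4}(\Omega)$, Lemma \ref{feireisl} with $p=4$ yields a unique $v\in C([0,T];W^{3/2,4}_{\bf n})\cap L^4(0,T;W^{2,4})$ with $\partial_t v\in L^4(Q)$, i.e. $v\in\mathcal{Y}_v$, together with the estimate \eqref{des-regularity}. In particular $\nabla v\in L^4(0,T;W^{1,4})\subset L^4(0,T;L^\infty)$.

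Next I would solve the $u$-equation $\partial_t u-\Delta u=\nabla\cdot(\bar u_+\nabla v)$. Writing $g:=\nabla\cdot(\bar u_+\nabla v)=\nabla\bar u_+\cdot\nabla v+\bar u_+\Delta v$, I estimate $g$ in $L^2(Q)$: the first term is controlled by $\|\nabla\bar u_+\|_{L^{8/3}(L^{8/3})}\|\nabla v\|_{L^{8}(L^{8})}$ or, more safely, using $\nabla v\in L^4(0,T;L^\infty)$ and $\nabla\bar u_+\in L^{8/3}(0,T;L^{8/3})$ together with a Hölder exponent check (the time exponents $1/4+3/8<1$ leave room, and $L^{8/3}\hookrightarrow L^{2}$ in space on a bounded domain), the product lies in $L^2(Q)$; the second term $\bar u_+\Delta v$ is handled by $\bar u_+\in L^\infty(0,T;L^2)\cap L^{8/3}(0,T;L^\infty)$ and $\Delta v\in L^4(0,T;L^4)$, again with a Hölder check giving $L^2(Q)$. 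Hence $g\in L^2(Q)$, and with $u_0\in H^1(\Omega)=\widehat W^{2-2/2,2}(\Omega)$, Lemma \ref{feireisl} with $p=2$ gives a unique $u\in C([0,T];H^1)\cap L^2(0,T;H^2_{\bf n})$, $\partial_t u\in L^2(Q)$, i.e. $u\in\mathcal{Y}_u$, with the corresponding estimate. This shows $R$ is well defined into $\mathcal{Y}_u\times\mathcal{Y}_v$.

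For compactness, I would note that $R$ maps bounded subsets of $\mathcal{X}_u\times\mathcal{X}_v$ into bounded subsets of $\mathcal{Y}_u\times\mathcal{Y}_v$ (all the estimates above depend only on the norms of $(\bar u,\bar v)$, $f$, $u_0$, $v_0$), and then invoke the Aubin--Lions--Simon compactness lemma: $\mathcal{Y}_v\hookrightarrow\hookrightarrow C^0([0,T];C(\overline\Omega))$ since $W^{2,4}(\Omega)\hookrightarrow\hookrightarrow C(\overline\Omega)$ (in 2D) and $\partial_t v\in L^4(Q)$, and $\mathcal{Y}_u\hookrightarrow\hookrightarrow \mathcal{X}_u$ since $H^2(\Omega)\hookrightarrow\hookrightarrow W^{1,8/3}(\Omega)$ and $H^1(\Omega)\hookrightarrow\hookrightarrow L^2(\Omega)$ with $\partial_t u\in L^2(Q)$; also $\mathcal{Y}_u\hookrightarrow L^\infty(0,T;H^1)\hookrightarrow L^\infty(0,T;L^2)$ is bounded. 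Thus $R$ sends bounded sets to relatively compact sets. Finally, continuity of $R$ on $\mathcal{X}_u\times\mathcal{X}_v$ follows from these a priori bounds plus uniqueness: given $(\bar u^k,\bar v^k)\to(\bar u,\bar v)$ in $\mathcal{X}_u\times\mathcal{X}_v$, the images are bounded in $\mathcal{Y}_u\times\mathcal{Y}_v$, hence up to a subsequence converge weakly there and strongly in $\mathcal{X}_u\times\mathcal{X}_v$; passing to the limit in the linear system \eqref{pf-1} — the truncations $\bar u^k_+\to\bar u_+$, $\bar v^k_+\to\bar v_+$ in the appropriate topologies, and the products $\bar u^k_+\nabla v^k$, $f\bar v^k_+$ pass to the limit — identifies the limit as $R(\bar u,\bar v)$, and uniqueness of the limit gives convergence of the whole sequence. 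The main obstacle is the Hölder bookkeeping needed to verify $\nabla\cdot(\bar u_+\nabla v)\in L^2(Q)$ from only the weak-space regularity of $\bar u$ and the $\mathcal{Y}_v$-regularity of $v$; this is precisely why the particular exponents $8/3$ and $W^{1,8/3}$ were chosen in the definition \eqref{pf} of $\mathcal{X}_u$, and it should be stated carefully.
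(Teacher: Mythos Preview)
Your overall strategy coincides with the paper's: solve first for $v$ via Lemma~\ref{feireisl} with $p=4$, then for $u$ with $p=2$, and deduce compactness from the embedding $\mathcal{Y}_u\times\mathcal{Y}_v\hookrightarrow\mathcal{X}_u\times\mathcal{X}_v$. The well-definedness part is essentially correct (your first H\"older route, $\nabla\bar u_+\in L^{8/3}(Q)$ and $\nabla v\in L^8(Q)$, is exactly the paper's; your ``more safely'' alternative via $\nabla v\in L^4(0,T;L^\infty)$ does \emph{not} reach $L^2$ in time, since $1/4+3/8=5/8>1/2$, so that sentence should be dropped).

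There is, however, a genuine gap in your compactness argument for the $L^{8/3}(0,T;W^{1,8/3}(\Omega))$ component of $\mathcal{X}_u$. You invoke $H^2(\Omega)\hookrightarrow\hookrightarrow W^{1,8/3}(\Omega)$ together with $\partial_t u\in L^2(Q)$, but $u$ is only bounded in $L^2(0,T;H^2)$, not in $L^{8/3}(0,T;H^2)$; Aubin--Lions then yields relative compactness in $L^2(0,T;W^{1,8/3})$, which is strictly weaker than what you need. The paper closes this gap by interpolating between $L^\infty(0,T;H^1)$ and $L^2(0,T;H^2)$ to obtain boundedness of $u$ in $L^{8/3}(0,T;H^{7/4}(\Omega))$, and then using the compact embedding $H^{7/4}(\Omega)\hookrightarrow W^{1,p}(\Omega)$ for $p<8$ (in 2D) before applying Simon's lemma. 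A similar slip occurs for $\mathcal{Y}_v$: compactness in $C([0,T];C(\overline\Omega))$ requires an $L^\infty$-in-time bound in a space compactly embedded in $C(\overline\Omega)$, so you should use $v$ bounded in $L^\infty(0,T;W^{3/2,4}_{\bf n}(\Omega))$ and $W^{3/2,4}(\Omega)\hookrightarrow\hookrightarrow C(\overline\Omega)$, rather than the $L^4(0,T;W^{2,4})$ bound. Finally, note that continuity of $R$ is not part of this lemma; it is proved separately in Lemma~\ref{conti-operator}.
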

\begin{proof}
Let $(\bar{u},\bar{v})\in \mathcal{X}_u\times\mathcal{X}_v$. Since $\mathcal{X}_u\hookrightarrow L^4(Q)$, 
$\mathcal{X}_v\subset L^\infty(Q)$ and $f\in L^4(Q)$, then $\bar{u}_++f\bar{v}_+\in L^4(Q)$. By applying Lemma \ref{feireisl} (for $p=4$), there exists a unique solution
$v\in\mathcal{Y}_v$ of (\ref{pf-1})$_2$ such that
\begin{eqnarray}
\|v\|_{L^4(W^{2,4})}+\|\partial_tv\|_{L^4(Q)}+\|v\|_{C(W^{3/2,4}_{\bf n})}&\le& C(\|\bar{u}\|_{L^4(Q)}+\|\bar{v}\|_{L^\infty(Q)}\|f\|_{L^4(Q)}+\|v_0\|_{W^{3/2,4}_{\bf n}})\nonumber\\
&\le&C(\|v_0\|_{W^{3/2,4}_{\bf n}},\|f\|_{L^4(Q)}).\label{com_2}
\end{eqnarray}
Now, using the fact that $v\in\mathcal{Y}_v$, in particular we have $\nabla v\in L^\infty(0,T;L^4(\Omega))\cap L^4(0,T;W^{1,4}(\Omega))\hookrightarrow L^8(Q)$, and taking into account that
$\nabla\bar{u}_+\in L^{8/3}(Q)$, $\Delta v\in L^4(Q)$, $\bar{u}_+\in L^\infty(0,T;L^2(\Omega))\cap L^2(0,T;H^1(\Omega))\hookrightarrow L^4(Q)$ 
we have $\nabla\cdot(\bar{u}_+\nabla v)=\bar{u}_+\Delta v+\nabla\bar{u}_+\cdot\nabla v\in L^{2}(Q).$ Thus, again by Lemma \ref{feireisl} (for $p=2$), we conclude that there exist  a unique $u\in\mathcal{Y}_u$ solution of
(\ref{pf-1})$_1$ such that
\begin{eqnarray}
\| u\|_{L^2(H^2)}+\|\partial_tu\|_{L^2(Q)}+\|u\|_{C(H^1)}
&\le& C(\|\bar{u}\|_{L^4(Q)}\|\Delta v\|_{L^4(Q)}+\|\nabla\bar{u}\|_{L^{8/3}(Q)}\|\nabla v\|_{L^8(Q)}+\|u_0\|_{H^1})\nonumber\\
&\le& C(\|u_0\|_{H^1},\|v_0\|_{W^{3/2,4}_{\bf n}},\|f\|_{L^4(Q)}).\label{com_3}
\end{eqnarray}
In the last inequality of \eqref{com_3}, estimate \eqref{com_2} has been used. 
Therefore,  $R$ is well defined from $\mathcal{X}_u\times\mathcal{X}_v$ to $\mathcal{Y}_u\times\mathcal{Y}_v$. 

Moreover, the compactness of $R$ is consequence of estimates (\ref{com_2}) and (\ref{com_3}), and the compact embedding 
$\mathcal{Y}_u\times\mathcal{Y}_v\hookrightarrow\mathcal{X}_u\times\mathcal{X}_v$.

Indeed, let  $(\bar{u},\bar{v})$ be in a bounded set of $\mathcal{X}_u\times\mathcal{X}_v$ and consider $(u,v)=R(\bar{u},\bar{v})$. From (\ref{com_3}),  $u$ is bounded in $L^{\infty}(0,T;H^1(\Omega))$ and $\partial_t u$ is bounded in $L^2(Q)$. Because of $H^1(\Omega) \hookrightarrow L^2(\Omega)$ in a compact way, from \cite[Th\'eor\`eme 5.1]{simon} one can  deduce that $\mathcal{Y}_u$ is compactly embedded in $C^0([0,T];L^2(\Omega))$. 
Moreover,  interpolating between $L^{\infty}(0,T;H^1(\Omega))$ and $L^{2}(0,T;H^2(\Omega))$,  $u$ is bounded in $L^{8/3}(0,T;H^{7/4}(\Omega))$ (see \cite{feireisl} or \cite{triebel}). 
Using that $H^{7/4}(\Omega) \hookrightarrow W^{1,p}(\Omega)$, $p<8$, in a compact way, and that $\partial_t u$ is bounded in $L^2(Q)$, again from \cite[Th\'eor\`eme 5.1]{simon} one has in particular  that
$\mathcal{Y}_u$ is compactly embedded in $L^{8/3}(0,T;W^{1,8/3}(\Omega))$.
Similarly, starting from (\ref{com_2}),  $v$ is bounded in $L^{\infty}(0,T;W_{\bf n}^{3/2,4}(\Omega))$ and $\partial_t v$ is bounded in $L^4(Q)$. 
Taking into account that $W_{\bf n}^{3/2,4}(\Omega)\hookrightarrow C^0(\bar{\Omega})$ in a compact way, one can  deduce that $\mathcal{Y}_v$ is compactly embedded in $C^0([0,T];C^0(\overline{\Omega}))=C^0([0,T]\times \overline{\Omega})$.

\end{proof}

\begin{lemma}\label{cot-1}
The set
\begin{equation}\label{cot-2}
T_\alpha=\{(u,v)\in\mathcal{Y}_u\times\mathcal{Y}_v\,:\, (u,v)=\alpha R(u,v)\mbox{ for some }\alpha\in[0,1]\}
\end{equation}
is bounded in $\mathcal{X}_u\times\mathcal{X}_v$
(independently of $\alpha \in [0,1]$).  In fact, $T_\alpha$ is also bounded in $\mathcal{Y}_u\times\mathcal{Y}_v$, i.e.~there exists 
\begin{equation}\label{cot-3}
M=M(m_0,T,\|u_0\|_{H^1},\|v_0\|_{W^{3/2,4}_{\bf n}},\|f\|_{L^4(Q)})>0,
\end{equation}
with $M$ independent of $\alpha$, such that 
all pairs of functions $(u,v)\in T_\alpha$ for $\alpha\in[0,1]$ satisfy
$$
\|(u,v)\|_{L^2(H^2)\times L^4(W^{2,4})}+\|(\partial_tu,\partial_tv)\|_{L^2(Q)\times L^4(Q)}+\|(u,v)\|_{C(H^1)\times C(W^{3/2,4}_{\bf n})} \le M.
$$

\end{lemma}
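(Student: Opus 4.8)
The plan is, given $(u,v)\in T_\alpha$, to reproduce the a priori estimates of a strong solution while tracking all constants so they stay uniform for $\alpha\in[0,1]$. If $\alpha=0$ then $(u,v)=(0,0)$, so assume $\alpha\in(0,1]$. Since $(u,v)=\alpha R(u,v)$, the pair $(u,v)$ solves, pointwise a.e.~in $Q$,
\[
\partial_t u-\Delta u=\nabla\cdot(u_+\nabla v),\qquad \partial_t v-\Delta v+v=\alpha(u_++fv_+),
\]
with $u(0)=\alpha u_0$, $v(0)=\alpha v_0$ and $\partial u/\partial{\bf n}=\partial v/\partial{\bf n}=0$. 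Testing the first equation by $u_-:=\min\{u,0\}$ (using $u_+u_-=0$, $u_+\nabla u_-=0$ a.e.) gives $\|u_-(t)\|^2\le\|(\alpha u_0)_-\|^2=0$, so $u\ge0$; testing the second by $v_-$ and using $u\ge0$, $fv_+v_-=0$ gives $v\ge0$. Hence $u_+=u$, $v_+=v$, so $(u,v)$ satisfies the genuine system \eqref{st-2}--\eqref{st-3} with data $(\alpha u_0,\alpha v_0)$, and integrating the first equation in $\Omega$ gives mass conservation $\int_\Omega u(t)=\alpha m_0\le m_0$.

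The heart of the proof is the a priori estimate in the energy spaces. Multiplying the $u$-equation by $u$, writing the chemotactic term as $-\int_\Omega u\nabla u\cdot\nabla v=\tfrac12\int_\Omega u^2\Delta v$ and substituting $\Delta v$ from the second equation isolates the favorable nonpositive term $-\tfrac{\alpha}{2}\int_\Omega u^3$; in parallel, the entropy balance
\[
\frac{d}{dt}\int_\Omega u\ln u+4\int_\Omega|\nabla\sqrt u|^2=-\int_\Omega\nabla u\cdot\nabla v,
\]
combined with the $H^1$- and $H^2$-estimates for $v$ obtained by testing the second equation by $v$, by $-\Delta v$ and by $\partial_t v$, is closed --- after controlling the cross terms $\int_\Omega u\nabla u\cdot\nabla v$, $\int_\Omega u|\nabla v|^2$ and the bilinear terms $\alpha\int_\Omega fv\,(-\Delta v)$, $\alpha\int_\Omega fv^2$, $\alpha\int_\Omega fv\,\partial_t v$ by Hölder's inequality, $f\in L^4(Q)$, the $2$D interpolation inequality \eqref{interpol} (applied to $u$, to $\sqrt u$ and to $\nabla v$), Young's inequality and the equivalent norms \eqref{equi}--\eqref{norma-1} --- into a Gronwall-type differential inequality. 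Since $\sqrt u\in L^\infty(0,T;L^2(\Omega))\cap L^2(0,T;H^1(\Omega))\hookrightarrow L^4(Q)$ in $2$D, this yields first $u\in L^2(Q)$, uniformly in $\alpha$; feeding this into Lemma \ref{feireisl} for the $v$-equation (with $p=2$) bounds $v$ in $L^\infty(0,T;H^1(\Omega))\cap L^2(0,T;H^2(\Omega))$ with $\partial_t v\in L^2(Q)$, hence $\nabla v\in L^4(Q)$; and then testing the $u$-equation by $u$ again gives a Gronwall inequality of the form $\tfrac{d}{dt}\|u\|^2+\|\nabla u\|^2\le C\bigl(1+\|\nabla v\|_{L^4}^4\bigr)\bigl(\|u\|^2+1\bigr)$ whose coefficient is now integrable in $t$, producing a uniform bound for $u$ in $L^\infty(0,T;H^1(\Omega))\cap L^2(0,T;H^2(\Omega))$ with $\partial_t u\in L^2(Q)$. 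In particular $T_\alpha$ is bounded in $\mathcal{X}_u\times\mathcal{X}_v$.

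To reach $\mathcal{Y}_u\times\mathcal{Y}_v$ one bootstraps in Lemma \ref{feireisl} as in the proof of Lemma \ref{compact}: from the previous step $u\in L^4(Q)$ and, after one intermediate application of Lemma \ref{feireisl} with an exponent $p\in(2,3)$ (so that $W^{2-2/p,p}(\Omega)\hookrightarrow C^0(\overline{\Omega})$), $v\in L^\infty(Q)$; hence $\alpha(u+fv)\in L^4(Q)$ and Lemma \ref{feireisl} with $p=4$ gives $v\in\mathcal{Y}_v$; then $\nabla v\in L^\infty(0,T;L^4(\Omega))\cap L^4(0,T;W^{1,4}(\Omega))\hookrightarrow L^8(Q)$ and $\Delta v\in L^4(Q)$, so $\nabla\cdot(u\nabla v)=u\Delta v+\nabla u\cdot\nabla v\in L^2(Q)$ and Lemma \ref{feireisl} with $p=2$ gives $u\in\mathcal{Y}_u$. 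Every nonlinear and control contribution along the way carries a factor $\alpha\le1$, and $\|\alpha u_0\|_{H^1}\le\|u_0\|_{H^1}$, $\|\alpha v_0\|_{W^{3/2,4}_{\bf n}}\le\|v_0\|_{W^{3/2,4}_{\bf n}}$, so all constants depend only on $m_0,T,\|u_0\|_{H^1},\|v_0\|_{W^{3/2,4}_{\bf n}},\|f\|_{L^4(Q)}$; this defines $M$ and, a fortiori, the boundedness of $T_\alpha$ in $\mathcal{X}_u\times\mathcal{X}_v$.

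The main obstacle is the energy estimate: since mass conservation only provides $u\in L^\infty(0,T;L^1(\Omega))$ for free, $L^2(Q)$-control of $u$ must be extracted from the dissipative structure through the entropy, and the higher-order estimates must then be closed so that the chemotactic term $\int_\Omega u^2\Delta v$ (equivalently $\int_\Omega u\nabla u\cdot\nabla v$) and the bilinear control term --- which is only $L^4$ in space-time --- are either absorbed by the parabolic dissipation or controlled by a Gronwall coefficient that is a priori integrable in time, with constants that degenerate neither as $\alpha\to0^+$ nor as $\alpha\to1^-$.
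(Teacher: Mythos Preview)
Your overall architecture---positivity, an entropy-type energy estimate, Gronwall, then maximal-regularity bootstrap via Lemma~\ref{feireisl}---matches the paper's. You are even more careful than the paper on one point: the initial data for $(u,v)\in T_\alpha$ are indeed $(\alpha u_0,\alpha v_0)$, so the conserved mass is $\alpha m_0\le m_0$.

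There is, however, a concrete gap. From the Gronwall inequality $\tfrac{d}{dt}\|u\|^2+\|\nabla u\|^2\le C(1+\|\nabla v\|_{L^4}^4)(\|u\|^2+1)$ you obtain only $u\in L^\infty(0,T;L^2)\cap L^2(0,T;H^1)$, \emph{not} $L^\infty(H^1)\cap L^2(H^2)$ as you claim. This matters for your bootstrap: with merely $\nabla u\in L^2(Q)$ (and $\nabla v\in L^8(Q)$ from $v\in\mathcal{Y}_v$), the product $\nabla u\cdot\nabla v$ lies only in $L^{8/5}(Q)$, so $\nabla\cdot(u\nabla v)\notin L^2(Q)$ and Lemma~\ref{feireisl} with $p=2$ does not apply to the $u$-equation. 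The paper closes this with a separate step: once $v\in\mathcal{Y}_v$ is in hand, one tests the $u$-equation by $-\Delta u$, obtaining via \eqref{interpol} and \eqref{norma-1} the inequality
\[
\frac{d}{dt}\|u\|_{H^1}^2+C\|u\|_{H^2}^2\le C\|u\|_{L^4}^2\|\Delta v\|_{L^4}^2+C\|\nabla v\|_{L^4}^4\|\nabla u\|^2+Cm_0^2,
\]
whose right-hand side is now integrable; this gives $u\in L^\infty(H^1)\cap L^2(H^2)$ directly, and $\partial_t u\in L^2(Q)$ follows from the equation.

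A secondary remark on the energy step: your ``substitution'' of $\Delta v$ from the $v$-equation into $\tfrac12\int u^2\Delta v$ produces a term $\tfrac12\int u^2\partial_t v$ for which you have no a~priori control, so that branch cannot close on its own. The clean mechanism (which the paper runs with $\ln(u+1)$ rather than $\ln u$, to sidestep vacuum) is a \emph{cancellation}: add $\alpha$ times the entropy identity to the $(-\Delta v)$-test of the $v$-equation; the cross terms $-\alpha\int\nabla u\cdot\nabla v$ and $+\alpha\int\nabla u\cdot\nabla v$ annihilate (up to the harmless residual $\alpha\int\frac{1}{u+1}\nabla u\cdot\nabla v$ in the $\ln(u+1)$ version), leaving only $-\alpha\int fv\,\Delta v$ to estimate. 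Your sketch lists the right ingredients but never makes this cancellation explicit.
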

\begin{proof}
 Let $(u,v)\in T_\alpha$ for $\alpha\in (0,1]$ (the case  $\alpha=0$  is trivial). Then, 

owing to Lemma 3.2, $(u,v) \in \mathcal{Y}_u \times \mathcal{Y}_v$ and satisfies pointwisely a.e.~in $Q$ the following problem:
\begin{equation}\label{pf-7}
\left\{
\begin{array}{rcl}
\partial_tu-\Delta u&=&\nabla\cdot({u}_+\nabla v),\\
\partial_tv-\Delta v+v&=&\alpha{u}_++\alpha f{v}_+
\end{array}
\right.
\end{equation}
endowed with the corresponding initial and boundary conditions. Therefore, it suffices to look a bound of $(u,v)$ in $\mathcal{Y}_u \times \mathcal{Y}_v$ independent of $\alpha$. 
This bound is carried out into five steps:

\vspace{0.2cm}
\noindent{\underline{Step 1:} \ $u,v\ge 0$ and $\displaystyle\int_\Omega u(t)=m_0$.
\vspace{0.2cm}

By testing (\ref{pf-7})$_1$ by $u_-:=\min\{u,0\}\le0$,
and considering that $u_-=0$ if $u\ge 0$, $\nabla u_-=\nabla u$ if $u\le 0$, and $\nabla u_-=0$ if $u>0$, we have
$$
\frac12\frac{d}{dt}\|u_-\|^2+\|\nabla u_-\|^2=-(u_+\nabla v,\nabla u_-)=0,
$$
thus $u_-\equiv 0$ and, consequently, $u\ge0$.  Similarly, testing  (\ref{pf-7})$_2$ by $v_-:=\min\{v,0\}\le0$ 
we obtain
$$
\frac12\frac{d}{dt}\|v_-\|^2+\|\nabla v_-\|^2+\|v_-\|^2=\alpha(u_+,v_-)+\alpha(fv_+,v_-)\le 0,
$$
which implies $v_-\equiv0$, then $v\ge0$. Therefore $(u_+,v_+)=(u,v)$. Finally, integrating (\ref{pf-7})$_1$ in $\Omega$ and using
(\ref{comp_u}) we obtain $\displaystyle\int_\Omega u(t)=m_0$.

\vspace{0.2cm}
\noindent\underline{Step 2:} $v$ is bounded in $L^\infty(0,T;H^1(\Omega))\cap L^2(0,T;H^2(\Omega))$.}
\vspace{0.2cm}

We observe that, thanks to the positivity  of $u$, we have $0\le \ln(u+1)\le u$. Then 
\begin{equation}\label{cot-4}
\int_\Omega|\ln(u+1)|^2\le \int_\Omega|u|^2.
\end{equation}
We also note that
\begin{equation}\label{cot-5}
\int_\Omega|\nabla\ln(u+1)|^2=\int_\Omega\left|\frac{\nabla u}{u+1}\right|^2\le \int_\Omega|\nabla u|^2.
\end{equation}
%
%
Taking into account that $u\in L^2(0,T;H^1(\Omega))$, from (\ref{cot-4}) and (\ref{cot-5}) we deduce that $\ln(u+1)\in L^2(0,T;H^1(\Omega))$.
Then, 
since $u\in \mathcal{Y}_u$,
 testing (\ref{pf-7})$_1$ by $\alpha\ln(u+1)\in L^2(0,T;H^1(\Omega))$ and (\ref{pf-7})$_2$ by $-\Delta v \in L^4(Q)$, and integrating by parts, we have
%
%
%
\begin{eqnarray}
&&\frac{d}{dt}\left[ \alpha\int_\Omega(u+1)\ln(u+1)+\frac12\|\nabla v\|^2\right]+4\alpha\|\nabla\sqrt{u+1}\|^2+\|\Delta v\|^2+\|\nabla v\|^2\nonumber\\
&&=-\alpha\int_\Omega\frac{u}{u+1}\nabla v\cdot\nabla u+\alpha\int_\Omega\nabla u\cdot\nabla v-\alpha\int_\Omega fv\Delta v\nonumber\\
&&=\alpha\int_\Omega\frac{1}{u+1}\nabla u\cdot\nabla v-\alpha\int_\Omega fv\Delta v.\label{pf-9}
\end{eqnarray}
Applying the H\"older and Young inequalities we obtain
\begin{equation} \label{pf-10}
\alpha\int_\Omega\frac{1}{u+1}\nabla u\cdot\nabla v
\le\frac{\alpha}{2}\int_\Omega\frac{|\nabla u|^2}{u+1}+\frac{\alpha}{2}\int_\Omega\frac{|\nabla v|^2}{u+1}
\le2\alpha\|\nabla\sqrt{u+1}\|^2+\frac{\alpha}{2}\|\nabla v\|^2,
\end{equation}
\begin{equation} \label{pf-11}
-\alpha\int_\Omega fv\Delta v
\le\alpha\|f\|_{L^4}\|v\|_{L^4}\|\Delta v\|\le\delta\|v\|^2_{H^2}+\alpha^2C_\delta\|f\|^2_{L^4}\|v\|^2_{H^1}.
\end{equation}
Moreover, integrating (\ref{pf-7})$_2$ in $\Omega$ and using (\ref{comp_u}) and (\ref{comp_v}), we have
$$
\frac{d}{dt}\left(\int_\Omega v\right)+\int_\Omega v=\alpha\, m_0+\alpha\int_\Omega fv.
$$
Multiplying this equation by $\displaystyle\left(\int_\Omega v\right)$ and using the H\"older and Young inequalities we obtain
\begin{eqnarray}
\frac12\frac{d}{dt}\left(\int_\Omega v\right)^2+\left(\int_\Omega v\right)^2&=&\alpha\, m_0\left(\int_\Omega v\right)+\alpha\left(\int_\Omega fv\right)\left(\int_\Omega v\right)\nonumber\\
&\le&\frac12\left(\int_\Omega v\right)^2+C\alpha^2m_0^2+C\alpha^2\| f\|^2\|v\|^2\label{pf-12}.
\end{eqnarray}

Replacing (\ref{pf-10})-(\ref{pf-12}) in (\ref{pf-9}), and taking into account that $\alpha\le1$, we  obtain
\begin{equation}\label{pf-13}
\frac{d}{dt}\left(\alpha\int_\Omega(u+1)\ln(u+1)+\frac12\|v\|^2_{H^1}\right)+2\alpha\|\nabla\sqrt{u+1}\|^2+C_1\|v\|^2_{H^2}
\le C(m_0^2+\|f\|^2_{L^4}\|v\|^2_{H^1}),
\end{equation}
where the constants $C,C_1$ are independent of $\alpha$. 
From (\ref{pf-13}) and Gronwall lemma we have 
\begin{eqnarray}
\|v\|^2_{L^\infty(H^1)}
\le \exp(\|f\|^2_{L^2(L^4)})\left(\|u_0\|^2+\|v_0\|^2_{H^1}+Cm_0^2T\right)
:= K_0(m_0,T,\|u_0\|,\|v_0\|_{H^1},\|f\|_{L^2(L^4)}).\label{e-31}
\end{eqnarray}
Now, integrating  (\ref{pf-13}) in $(0,T)$ and using (\ref{e-31}) we obtain 
\begin{eqnarray}
\|v\|^2_{L^2(H^2)}&\le&
C\left(\|u_0\|^2+\|v_0\|^2_{H^1}+m_0^2T+\|v\|^2_{L^\infty(H^1)}\|f\|^2_{L^2(L^4)}\right)\nonumber\\
&:=&K_1(m_0,T,\|u_0\|, \|v_0\|_{H^1},\|f\|_{L^2(L^4)}).\label{e-33}
\end{eqnarray}
Therefore, from (\ref{e-31}) and  (\ref{e-33}) we conclude that
$v$ is bounded in $L^{\infty}(0,T;H^1(\Omega))\cap L^2(0,T;H^2(\Omega))$.

\vspace{0.2cm}
\noindent{\underline{Step 3:} $u$ is bounded in $L^\infty(0,T;L^2(\Omega))\cap L^2(0,T;H^1(\Omega))$.}
\vspace{0.2cm}

Testing (\ref{pf-7})$_1$ by $u$, applying the H\"older and Young inequalities, and using (\ref{interpol}), we obtain
\begin{eqnarray*}
\frac12\frac{d}{dt} \Vert u \Vert^2 + \Vert \nabla u \Vert^2 
&=&-(u \, \nabla v, \nabla u)\le \|u\|_{L^4}\|\nabla v\|_{L^4}\|\nabla u\|\le C\|u\|^{1/2}\|\nabla v\|_{L^4}\|u\|^{3/2}_{H^1}\\
&\le&C\|u\|^2\|\nabla v\|^4_{L^4}+\frac{1}{2}\|u\|^2_{H^1}.
\end{eqnarray*}
Thus,   taking into account that $m_0^2=\left(\displaystyle\int_\Omega u(t)\right)^2$ and the  equivalent norm of $H^1(\Omega)$ given in \eqref{equi}, we have
\begin{equation}\label{e-7}
\frac{d}{dt} \Vert u \Vert^2 + \Vert  u \Vert_{H^1}^2 \le 
C \, \Vert  \nabla v \Vert^4_{L^4}\Vert u \Vert^2 +2 m_0^2.
\end{equation}
On the other hand, using (\ref{interpol}), jointly (\ref{e-31}) and (\ref{e-33}),
$$
\|\nabla v\|^4_{L^4(Q)}\le CK_0K_1.
$$
Therefore, we can  apply the Gronwall lemma in (\ref{e-7}), obtaining
\begin{equation}\label{e-70}
\|u\|^2_{L^\infty(L^2)}\le \exp(CK_0K_1)(\|u_0\|^2+2m_0^2):=K_2(m_0,T,\|u_0\|,\|v_0\|_{H^1},\|f\|_{L^2(L^4)}).
\end{equation}
Also, integrating (\ref{e-7}) in $(0,T)$ we have
\begin{eqnarray}
\|u\|^2_{L^2(H^1)}&\le&\|u_0\|^2+2m_0^2T+CK_0K_1 \|u\|^2_{L^\infty(L^2)}
\le\|u_0\|^2+2m_0^2T+CK_0K_1K_2 \nonumber\\
&:=&K_3(m_0,T,\|u_0\|,\|v_0\|_{H^1},\|f\|_{L^2(L^4)}).\label{e-700}
\end{eqnarray}

Therefore, from (\ref{e-70}) and (\ref{e-700}) we deduce that $u$ is bounded in $L^\infty(0,T;L^2(\Omega))\cap L^2(0,T;H^1(\Omega))$.

\vspace{0.2cm}
\noindent{\underline{Step 4:} $v$ is bounded in $\mathcal{Y}_v$.}
\vspace{0.2cm}

Taking into account that $f\in L^4(Q)$ and $v\in L^\infty(0,T;H^1(\Omega))$, in particular $\alpha fv\in L^{7/2}(Q)$. 
Then using Lemma \ref{feireisl} (for $p=\frac72$) in (\ref{pf-7})$_2$ we conclude that  $v$ satisfies the following inequality 
\begin{eqnarray}\label{cot-8}
\|v\|_{L^{7/2}(W^{2,7/2})}+\|\partial_tv\|_{L^{7/2}(Q)}+\|v\|_{C(W^{10/7,7/2}_{\bf n})}
&\le&  C(\alpha\|u+fv\|_{L^{7/2}(Q)}+\|v_0\|_{W^{10/7,7/2}_{\bf n}})\nonumber\\
&\le& C(\|u\|_{L^4(Q)}+\|f\|_{L^{4}(Q)}\|v\|_{L^{28}(Q)}+\|v_0\|_{W^{10/7,7/2}_{\bf n}})\label{e-700a}.
\end{eqnarray}
From \eqref{e-31}, $\|v\|_{L^{28}(Q)}\le K_0$.
Using (\ref{interpol}) and taking into account (\ref{e-70}) and (\ref{e-700})  we have
\begin{equation}\label{estim_2}
\|u\|^4_{L^4(Q)}\le C\|u\|^2_{L^\infty(L^2)}\|u\|^2_{L^2(H^1)}\le CK_2K_3.
\end{equation}
Therefore, from \eqref{e-700a} one has $\|v\|_{C(W^{10/7,7/2}_{\bf n})}$ is bounded (independently of $\alpha$). 
In particular, by Sobolev embeddings, we obtain $\|v\|_{L^\infty(Q)}$ is also bounded. 

Then, from (\ref{estim_2}) and using again Lemma \ref{feireisl} (for $p=4$), we obtain that $v$ satisfies the estimate 
\begin{eqnarray}\label{estim_1}
\| v\|_{L^4(W^{2,4})}+\|\partial_t v\|_{L^4(Q)}+\|v\|_{C({W}^{3/2,4}_{\bf n})}
&\le&C(\alpha\|{u}+fv\|_{L^4(Q)}+\|v_0\|_{{W}^{3/2,4}_{\bf n}})\nonumber\\
&\le& C(\|{u}\|_{L^4(Q)}+\|v\|_{L^\infty(Q)}\|f\|_{L^4(Q)}+\|v_0\|_{{W}^{3/2,4}_{\bf n}})\nonumber\\
&\le&K_4(m_0,T,\|u_0\|,\|v_0\|_{W^{3/2,4}_{\bf n}},\|f\|_{L^4(Q)}).
\end{eqnarray}
Therefore $v$ is bounded in $\mathcal{Y}_v$.

\vspace{0.2cm}
\noindent{\underline{Step 5:} $u$ is bounded in $\mathcal{Y}_u$.}
\vspace{0.2cm}

Testing (\ref{pf-7})$_1$ by $-\Delta u\in L^2(Q)$ we have 
\begin{equation}\label{fij_1}
\frac12\frac{d}{dt}\|\nabla u\|^2+\|\Delta u\|^2=-(\nabla\cdot(u\nabla v),\Delta u)=-(u\Delta v+\nabla u\cdot\nabla v,\Delta u).
\end{equation}
By the  H\"older and Young inequalities, and using interpolation inequality (\ref{interpol}), we obtain 
\begin{eqnarray}
-(u\Delta v+\nabla u\cdot\nabla v,\Delta u)&\le&(\|u\|_{L^4}\|\Delta v\|_{L^4}+\|\nabla u\|_{L^4}\|\nabla v\|_{L^4})\|\Delta u\|\nonumber\\
&\le&\delta\|\Delta u\|^2+C_\delta\|u\|^2_{L^4}\|\Delta v\|^2_{L^4}+C\|\nabla u\|^{1/2}\|\nabla v\|_{L^4}\|u\|^{3/2}_{H^2}\nonumber\\
&\le&\delta\|\Delta u\|^2+C_\delta\|u\|^2_{L^4}\|\Delta v\|^2_{L^4}+C_\delta\|\nabla u\|^2\|\nabla v\|^4_{L^4}+\delta\|u\|^{2}_{H^2}.\label{fij_2}
\end{eqnarray}
Replacing  (\ref{fij_2}) in (\ref{fij_1}), choosing $\delta$ small enough to absorb the 
$\|\Delta u\|^2$ and $\| u\|_{H^2}^2$ terms, and taking into account that $\left(\displaystyle\int_\Omega u(t)\right)^2=m_0^2$ and \eqref{equi} and \eqref{norma-1}, we have
\begin{equation}\label{fij_3}
\frac{d}{dt}\|u\|^2_{H^1}+C\|u\|^2_{H^2}\le C\|u\|^2_{L^4}\|\Delta v\|^2_{L^4}+C\|\nabla u\|^2\|\nabla v\|^4_{L^4}+Cm_0^2.
\end{equation}
Then, from \eqref{estim_2} and  (\ref{estim_1}), and applying Gronwall lemma to (\ref{fij_3}), we deduce
\begin{equation}\label{fij_4}
\|u\|^2_{L^\infty(H^1)}\le K_5(m_0,T,\|u_0\|_{H^1}, \|v_0\|_{W^{3/2,4}_{\bf n}},\|f\|_{L^4(Q)}).
\end{equation}
Finally, integrating (\ref{fij_3}) in $(0,T)$ we obtain
\begin{equation}\label{fij_5}
\|u\|^2_{L^2(H^2)} \le K_6(m_0,T,\|u_0\|_{H^1}, \|v_0\|_{W^{3/2,4}_{\bf n}},\|f\|_{L^4(Q)}).
\end{equation}
Then, from (\ref{pf-7})$_1$, (\ref{estim_1}), (\ref{fij_4}) and (\ref{fij_5}) we have
\begin{eqnarray}\label{fij_6}
\|\partial_tu\|_{L^2(Q)}&=&\|\Delta u+u\Delta v+\nabla u\cdot\nabla v\|_{L^2(Q)}\nonumber\\
&\le&\|\Delta u\|_{L^2(Q)}+\|u\|_{L^4(Q)}\|\Delta v\|_{L^4(Q)}+\|\nabla u\|_{L^4(Q)}\|\nabla v\|_{L^4(Q)}\nonumber\\
&\le&K_7(m_0,T,\|u_0\|_{H^1}, \|v_0\|_{W^{3/2,4}_{\bf n}},\|f\|_{L^4(Q)}),
\end{eqnarray}
which implies that $u$ is bounded in $\mathcal{Y}_u$.

Finally, from (\ref{estim_1}) and  (\ref{fij_4})-(\ref{fij_6}) we conclude that the elements of $T_\alpha$ are bounded in $\mathcal{Y}_u\times\mathcal{Y}_v$ for $\alpha\in (0,1]$.
 The radius $M$ in (\ref{cot-3}) follows from (\ref{estim_1}) and  (\ref{fij_4})-(\ref{fij_6}).
\end{proof}

\begin{lemma}\label{conti-operator}
The operator 
$R:\mathcal{X}_u\times \mathcal{X}_v\rightarrow \mathcal{X}_u\times \mathcal{X}_v$, defined in
(\ref{pf-1}), is continuous.
\end{lemma}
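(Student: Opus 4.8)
The plan is to reduce everything to the linearity of the two decoupled problems in (\ref{pf-1}): since Lemma \ref{feireisl} provides, for each fixed $p$, a \emph{linear} and \emph{bounded} (hence continuous) solution operator from the right-hand side to the solution class, it suffices to prove that the source terms of (\ref{pf-1})$_1$ and (\ref{pf-1})$_2$ depend continuously on $(\bar u,\bar v)\in\mathcal{X}_u\times\mathcal{X}_v$ in the appropriate norms, namely $L^2(Q)$ and $L^4(Q)$ respectively.

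First I would fix a sequence $(\bar u_n,\bar v_n)\to(\bar u,\bar v)$ in $\mathcal{X}_u\times\mathcal{X}_v$ and write $(u_n,v_n)=R(\bar u_n,\bar v_n)$, $(u,v)=R(\bar u,\bar v)$. Because $\mathcal{X}_u\hookrightarrow L^4(Q)$ and the positive part $s\mapsto s_+$ is $1$-Lipschitz with $0_+=0$, one has $\bar u_{n,+}\to\bar u_+$ in $L^4(Q)$; because $\mathcal{X}_v=C^0([0,T];C(\overline\Omega))\hookrightarrow L^\infty(Q)$, one has $\bar v_{n,+}\to\bar v_+$ in $L^\infty(Q)$, and hence $f\bar v_{n,+}\to f\bar v_+$ in $L^4(Q)$ since $f\in L^4(Q)$. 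Thus the source term of (\ref{pf-1})$_2$ converges in $L^4(Q)$, so Lemma \ref{feireisl} with $p=4$ applied to $v_n-v$ gives $v_n\to v$ in $\mathcal{Y}_v$ (in particular in $C([0,T];W^{3/2,4}_{{\bf n}}(\Omega))$). Consequently $\Delta v_n\to\Delta v$ in $L^4(Q)$ and, by the embedding $\nabla v\in L^\infty(0,T;L^4(\Omega))\cap L^4(0,T;W^{1,4}(\Omega))\hookrightarrow L^8(Q)$ already used in the proof of Lemma \ref{compact}, $\nabla v_n\to\nabla v$ in $L^8(Q)$.

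Next I would pass to the limit in the source term of (\ref{pf-1})$_1$, written as $\nabla\cdot(\bar u_{n,+}\nabla v_n)=\bar u_{n,+}\Delta v_n+\nabla\bar u_{n,+}\cdot\nabla v_n$. The first term converges to $\bar u_+\Delta v$ in $L^2(Q)$ by the two $L^4(Q)$-convergences above. For the second term, since $\bar u_n\to\bar u$ in $L^{8/3}(0,T;W^{1,8/3}(\Omega))$ and $\nabla\bar u_{n,+}=\mathbf{1}_{\{\bar u_n>0\}}\nabla\bar u_n$, the sequence $\nabla\bar u_{n,+}$ is bounded in $L^{8/3}(Q)$ and converges to $\nabla\bar u_+$ in $L^{8/3}(Q)$ (strongly, by continuity of the positive-part superposition operator on $W^{1,8/3}(\Omega)$, or at least weakly); combined with $\nabla v_n\to\nabla v$ in $L^8(Q)$ and the Hölder identity $\tfrac38+\tfrac18=\tfrac12$, this gives $\nabla\bar u_{n,+}\cdot\nabla v_n\to\nabla\bar u_+\cdot\nabla v$ in $L^2(Q)$ (respectively, weakly in $L^2(Q)$). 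Hence the source of (\ref{pf-1})$_1$ converges (weakly) in $L^2(Q)$, and Lemma \ref{feireisl} with $p=2$ — together with the compact embedding $\mathcal{Y}_u\hookrightarrow\mathcal{X}_u$ recalled in Lemma \ref{compact}, used to upgrade weak convergence of $u_n$ in $\mathcal{Y}_u$ to strong convergence in $\mathcal{X}_u$ — yields $u_n\to u$ in $\mathcal{X}_u$, where $u$ is the unique solution of (\ref{pf-1})$_1$ associated to $(\bar u,v)$, i.e.\ the first component of $R(\bar u,\bar v)$. Since the limit is uniquely determined, the whole sequence converges, which proves that $R$ is continuous on $\mathcal{X}_u\times\mathcal{X}_v$.

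I expect the only delicate point to be the behaviour of $\nabla\bar u_{n,+}$: either one invokes the continuity of the positive-part truncation as a superposition operator on $W^{1,8/3}(\Omega)$ (Stampacchia's chain rule together with a Marcus–Mizel-type continuity result), or one avoids it by keeping only the weak convergence $\nabla\bar u_{n,+}\rightharpoonup\nabla\bar u_+$ in $L^{8/3}(Q)$ and compensating with the strong convergence $\nabla v_n\to\nabla v$ in $L^8(Q)$ and the compactness $\mathcal{Y}_u\hookrightarrow\mathcal{X}_u$; everything else is routine Hölder bookkeeping and the linearity of the heat operator.
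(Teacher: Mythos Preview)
Your argument is correct but follows a different route from the paper's. The paper proceeds by compactness and limit identification: since $(\bar u_m,\bar v_m)$ is bounded in $\mathcal{X}_u\times\mathcal{X}_v$, the estimates (\ref{com_2})--(\ref{com_3}) bound $R(\bar u_m,\bar v_m)$ in $\mathcal{Y}_u\times\mathcal{Y}_v$; the compact embedding $\mathcal{Y}_u\times\mathcal{Y}_v\hookrightarrow\mathcal{X}_u\times\mathcal{X}_v$ then yields a subsequence converging weakly in $\mathcal{Y}_u\times\mathcal{Y}_v$ and strongly in $\mathcal{X}_u\times\mathcal{X}_v$ to some $(\tilde u,\tilde v)$, and one passes to the limit in (\ref{pf-1}) to identify $(\tilde u,\tilde v)=R(\bar u,\bar v)$; uniqueness of the limit gives convergence of the full sequence. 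The key simplification in the paper's route is that, in the weak formulation, the term $\nabla\cdot(\bar u_{m,+}\nabla v_m)$ is integrated by parts against test functions, so one only needs $\bar u_{m,+}\to\bar u_+$ strongly in $L^4(Q)$ together with weak convergence of $\nabla v_m$, and the delicate behaviour of $\nabla\bar u_{m,+}$ never appears. Your approach instead exploits the linearity and continuity of the solution operator in Lemma~\ref{feireisl} to obtain direct strong convergence of the source terms; this is more quantitative (you actually get $v_n\to v$ in the full $\mathcal{Y}_v$-norm, not just in $\mathcal{X}_v$) but forces you to confront the continuity of $\bar u\mapsto\nabla\bar u_+$ in $L^{8/3}$, which you correctly flag and for which both of your proposed remedies (Marcus--Mizel-type continuity of the truncation on $W^{1,8/3}$, or weak convergence of $\nabla\bar u_{n,+}$ identified via $\bar u_{n,+}\to\bar u_+$ in $L^{8/3}(Q)$ paired with the strong convergence $\nabla v_n\to\nabla v$ in $L^8(Q)$ and the compactness $\mathcal{Y}_u\hookrightarrow\mathcal{X}_u$) are valid.
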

\begin{proof}
Let $\{(\bar{u}_m,\bar{v}_m)\}_{m\in\mathbb{N}}\subset \mathcal{X}_u\times \mathcal{X}_v$ be a sequence such that
\begin{equation}\label{cont_1}
(\bar{u}_m,\bar{v}_m)\rightarrow(\bar{u},\bar{v})\mbox{ in }\mathcal{X}_u\times \mathcal{X}_v.
\end{equation}
In particular, $\{(\bar{u}_m,\bar{v}_m)\}_{m\in\mathbb{N}}$ is bounded in $\mathcal{X}_u\times \mathcal{X}_v$, thus, from (\ref{com_2}) and (\ref{com_3}) we deduce that the sequence 
$\{(u_m,v_m):=R(\bar{u}_m,\bar{v}_m)\}_{m\in\mathbb{N}}$ is bounded in  $\mathcal{Y}_u\times \mathcal{Y}_v$. Then,
from the compactness of $\mathcal{Y}_u\times\mathcal{Y}_v$ in $\mathcal{X}_u\times \mathcal{X}_v$ (see the proof of Lemma \ref{compact}),
 there exists a subsequence
of $\{R(\bar{u}_m,\bar{v}_m)\}_{m\in\mathbb{N}}$, still denoted by  $\{R(\bar{u}_m,\bar{v}_m)\}_{m\in\mathbb{N}}$, and an element
$(\tilde{u},\tilde{v})\in\mathcal{Y}_u\times\mathcal{Y}_v$ such that
\begin{equation}\label{cont_2}
R(\bar{u}_m,\bar{v}_m)\rightarrow (\tilde{u},\tilde{v})\mbox{ weak in }\mathcal{Y}_u\times\mathcal{Y}_v\mbox{ and strong in }\mathcal{X}_u\times \mathcal{X}_v.
\end{equation}

From (\ref{cont_1}) and  (\ref{cont_2}) we can take  the limit in (\ref{pf-1}), when $m$ goes to $+\infty$, with
$(u,v)=R(\bar{u}_m,\bar{v}_m)$ and $(\bar{u},\bar{v})=(\bar{u}_m,\bar{v}_m)$, which implies  that
$R(\bar{u},\bar{v})=(\tilde{u},\tilde{v})$. Then, by the uniqueness of limit the whole sequence $\{R(\bar{u}_m,\bar{v}_m)\}_{m\in\mathbb{N}}$ converges to 
$R(\bar{u},\bar{v})$ strongly in $\mathcal{X}_u\times\mathcal{X}_v$. Thus, operator $R$ is continuous from $\mathcal{X}_u\times\mathcal{X}_v$ into itself. 
\end{proof}
Consequently, from Lemmas  {\ref{compact}}, \ref{cot-1} and \ref{conti-operator}, it follows that the operator $R$ and the set $T_\alpha$ satisfy
the conditions of the Leray-Schauder fixed point theorem. Thus, we conclude that the map $R(\bar{u},\bar{v})$ has a fixed point, 
$R(u,v)=(u,v)$, which is a solution to system (\ref{eq1})-(\ref{eq3}).

Finally, we observe that estimate (\ref{bound_solution})  follows from (\ref{estim_1}) and (\ref{fij_4})-(\ref{fij_6}).
\subsection{Uniqueness}
Let $(u_1,v_1),\, (u_2,v_2)\in\mathcal{Y}_u\times\mathcal{Y}_v$ two solutions of system (\ref{eq1})-(\ref{eq3}). Subtracting  equations (\ref{eq1})-(\ref{eq3}) for
$(u_1,v_1)$ and $(u_2,v_2)$, and  denoting
$u:=u_1-u_2$ and $v:=v_1-v_2$, we obtain the following system
\begin{equation}\label{uni-1}
\left\{
\begin{array}{rcl}
\partial_tu-\Delta u&=&\nabla\cdot(u_1\nabla v+u\nabla v_2)\ \mbox{ in }Q,\\
\partial_tv-\Delta v+v&=&u+fv\ \mbox{ in }Q,\\
u(0,x)&=&0,\ v(0,x)=0\ \mbox{ in }\Omega,\\
\dfrac{\partial u}{\partial {\bf n}}&=&0,\ \dfrac{\partial v}{\partial{\bf n}}=0\ \mbox{ on }(0,T)\times\partial\Omega.
\end{array}
\right.
\end{equation}
Testing (\ref{uni-1})$_1$ by $u$ and (\ref{uni-1})$_2$ by $-\Delta v$  we have
\begin{eqnarray}
\frac{d}{dt}\left(\|u\|^2+\frac12\|\nabla v\|^2\right)+\|\nabla u\|^2+\|\Delta v\|^2+\|\nabla v\|^2
=-(u_1\nabla v,\nabla u)-(u\nabla v_2,\nabla u)+(u,-\Delta v)+(fv,-\Delta v).\label{uni-2}
\end{eqnarray}
Applying the H\"{o}lder and Young inequalities, and taking into account (\ref{interpol}), we obtain 
\begin{eqnarray}
-(u_1\nabla v,\nabla u)&\le&\|u_1\|_{L^4}\|\nabla v\|_{L^4}\|\nabla u\|\le C\|u_1\|_{L^4}\|\nabla v\|^{1/2}\|\nabla v\|^{1/2}_{H^1}\|\nabla u\|\nonumber\\
&\le&\delta(\|\nabla v\|^2_{H^1}+\|\nabla u\|^2)+C_\delta\|u_1\|^4_{L^4}\|\nabla v\|^2,\label{uni-3}\\
-(u\nabla v_2,\nabla u)&\le&\|u\|_{L^4}\|\nabla v_2\|_{L^4}\|\nabla u\|\le C\|u\|^{1/2}\|u\|_{H^1}^{1/2}\|\nabla v_2\|_{L^4}\|\nabla u\|\nonumber\\
&\le&\delta\|u\|^2_{H^1}+C_\delta\|\nabla v_2\|^4_{L^4}\|u\|^2,\label{uni-4}\\
(u,-\Delta v)&\le&\delta\|\Delta v\|^2+C_\delta\|u\|^2,\label{uni-5}\\
(fv,-\Delta v)&\le&\|f\|_{L^4}\|v\|_{L^4}\|\Delta v\|\le \delta\|v\|^2_{H^2}+C_\delta\|f\|^2_{L^4}\|v\|^2_{H^1}.\label{uni-6}
\end{eqnarray}
Replacing (\ref{uni-3})-(\ref{uni-6}) in (\ref{uni-2}), and using the fact that $\displaystyle\int_\Omega u(t)=0,\ \forall t>0;$ and
$$
\displaystyle\frac{d}{dt}\left(\displaystyle\int_\Omega v\right)+\int_\Omega v=\int_\Omega fv,
$$
hence
$$
\displaystyle\frac{d}{dt}\left(\int_\Omega v\right)^2+\left(\int_\Omega v\right)^2\le C\|f\|^2\|v\|^2,
$$

and by choosing $\delta$ small enough,  we have
\begin{eqnarray}
\frac{d}{dt}\left(\|u\|^2+\frac12\|v\|^2_{H^1}\right)+C(\|u\|^2_{H^1}+\|v\|^2_{H^2})
\le C(\|u_1\|^4_{L^4}\|\nabla v\|^2+(\|\nabla v_2\|^4_{L^4}+1)\|u\|^2+\|f\|^2_{L^4}\|v\|^2_{H^1}).\label{uni-7}
\end{eqnarray}
Therefore, from (\ref{uni-7}) and Gronwall lemma,  since $u_0=v_0=0$ and $(u_1,\nabla v_2)\in L^4(Q)\times L^4(Q)$, we obtain  $u=v=0$, and the uniqueness follows.

Thus, the proof of Theorem \ref{strong_solution} is finished.

\begin{remark}
Since $v\in\mathcal{Y}_v$, in particular $v\in L^\infty(Q)$. Thus, $v$ does not blow-up. Moreover, if initial data $u_0\in W^{5/4,8/3}(\Omega)$, we can obtain more regularity for $u$ and conclude that $u$
does not blow-up at finite time. Indeed, from (\ref{fij_4}) and (\ref{fij_5}) we deduce that
$u\in L^\infty(0,T; H^1(\Omega))\cap L^2(0,T;H^2(\Omega))\hookrightarrow L^q(Q)$, for $1\le q<\infty$. Then, taking into account that 
$\nabla u\in L^\infty(0,T;L^2(\Omega))\cap L^2(0,T;H^1(\Omega))\hookrightarrow L^4(Q)$, $\nabla v\in L^\infty(0,T;L^4(\Omega))\cap L^4(0,T;W^{1,4}(\Omega))\hookrightarrow L^8(Q)$,
and $\Delta v\in L^4(Q)$ we have
$
\nabla\cdot(u\nabla v)=u\Delta v+\nabla u\cdot\nabla v\in L^{8/3}(Q).
$
Thus,  Lemma \ref{feireisl} (for $p=8/3$) for  (\ref{pf-7})$_1$ allows us to conclude that 
$u\in L^\infty(0,T;W^{5/4,8/3}(\Omega))\cap L^{8/3}(0,T;W^{2,8/3}(\Omega))$, with $\partial_tu\in L^{8/3}(Q)$. In particular, we obtain
that $u\in L^\infty(Q)$.
\end{remark}
\begin{remark}
Cie\'{s}lak et al. \cite{cieslak}  studied system (\ref{eq1})-(\ref{eq3}) with $f\equiv0$. They proved the existence of classical solutions using the abstract theory for quasilinear parabolic 
systems developed by Amann \cite{amann}. This theory for classical solutions can be applied  here introducing a regularized problem related to (\ref{eq1})-(\ref{eq3}) by choosing 
a sequence of bilinear controls $\{f^\varepsilon\}_{\varepsilon>0}$, with $f^\varepsilon$ regular enough, such that $f^\varepsilon\rightarrow f$ in $L^4(Q)$, as $\varepsilon\rightarrow0$, 
and the corresponding regularization of the initial data. We would obtain a local unique classical solution $(u^\varepsilon,v^\varepsilon)$ 
of the regularized problem, but to obtain estimates for $u^\varepsilon$ and $v^\varepsilon$, independent of $\varepsilon$ and enough to pass to the limit, we must reproduce the same estimates 
that we have made using the Leray-Schauder fixed point theorem (see Lemma \ref{cot-1}, for the estimates, and Lemmas \ref{compact} and \ref{conti-operator}, for pass to the limit).
\end{remark}

\section{The Optimal Control Problem}
\label{sec:4}

In this section we establish the statement of the bilinear control problem under study. We suppose that 
$\mathcal{F}\subset L^4(Q_c):=L^4(0,T;L^4(\Omega_c))$ is a nonempty, closed and convex set, 
where $\Omega_c\subset\Omega$ is the control domain, and $\Omega_d\subset\Omega$ is the observability  domain. We consider data 
$u_0\in H^{1}(\Omega)$, $v_0\in W^{3/2,4}_{{\bf n}}(\Omega)$ with $u_0\ge 0$ and $v_0\ge 0$ in $\Omega$, 
 and  the function $f\in\mathcal{F}$ that describes the bilinear control acting 
on the  $v$-equation.

Now, we define the following constrained minimization problem
related to system (\ref{eq1})-(\ref{eq3}):
\begin{equation}\label{func}
\left\{
\begin{array}{l}
\mbox{Find  }(u,v,f)\in 
 \mathcal{M}
\mbox{ such that the functional }\\
J(u,v,f):=\dfrac{\alpha_u}{2}\displaystyle\int_0^T\int_{\Omega_d}|u(x,t)-u_d(x,t)|^2dxdt+
\frac{\alpha_v}{2}\int_0^T\int_{\Omega_d}|v(x,t)-v_d(x,t)|^2dxdt\vspace{0.2cm}\\
\hspace{2.2cm}+ \dfrac{N}{4}\displaystyle\int_0^T\int_{\Omega_c}|f(x,t)|^4dxdt\vspace{0.2cm}\\
\mbox{ is minimized, subject to $(u,v,f)$ satisfies the PDE system (\ref{eq1})-(\ref{eq3}) ,}
\end{array}
\right.
\end{equation}
where
\begin{equation}\label{M}
\mathcal{M}:=\mathcal{Y}_u\times \mathcal{Y}_v\times\mathcal{F}.
\end{equation}

Here $(u_d,\, v_d)\in L^2(Q_d)\times L^2(Q_d)$ represents  the desired states and the nonnegative real numbers $\alpha_u$, $\alpha_v$, and $N$ measure the cost of the states and control, respectively.
The set of admissible solutions of optimal control problem (\ref{func}) is defined by
\begin{equation}\label{adm}
\mathcal{S}_{ad}=\{s=(u,v,f)\in
\mathcal{M}\,:\, s \mbox{ is a strong solution of (\ref{eq1})-(\ref{eq3})}\}.
\end{equation}
The functional $J$ defined in (\ref{func}) describes the deviation of the cell density $u$ from a desired cell density $u_d$ and the deviation of the
chemical concentration $v$ from a desired chemical $v_d$, plus the cost of the control measured in the $L^4(\Omega)$-norm.

\subsection{Existence of global  Optimal Solution}

In this subsection we will prove the existence of a global  optimal solution of problem (\ref{func}). 
First we introduce the concept of optimal solution for  problem (\ref{func}).
\begin{definition}
An element $(\tilde{u},\tilde{v},\tilde{f})\in\mathcal{S}_{ad}$ will be called a global optimal solution of problem (\ref{func}) if
\begin{equation}\label{solution}
J(\tilde{u},\tilde{v},\tilde{f})=\min_{(u,v,f)\in\mathcal{S}_{ad}}J(u,v,f).
\end{equation}
\end{definition}

Thus, we have  the following result. 

\begin{theorem}\label{optimal_solution}
Let $u_0\in H^{1}(\Omega)$ and $v_0\in W^{3/2,4}_{{\bf n}}(\Omega)$ with $u_0\ge 0$ and $v_0\ge 0$ in $\Omega$. We assume that either $N>0$  or  $\mathcal{F}$ is bounded in $L^4(Q_c)$, then the optimal control problem (\ref{func})
has at least one global optimal solution $(\tilde{u},\tilde{v},\tilde{f})\in\mathcal{S}_{ad}$.
\end{theorem}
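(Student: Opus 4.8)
The plan is to use the direct method of the calculus of variations. First I would check that the admissible set $\mathcal{S}_{ad}$ is nonempty: by Theorem \ref{strong_solution}, for any fixed $f\in\mathcal{F}$ there exists a (unique) strong solution $(u,v)\in\mathcal{Y}_u\times\mathcal{Y}_v$ of (\ref{eq1})-(\ref{eq3}), so $(u,v,f)\in\mathcal{S}_{ad}$. Since $J\ge 0$ on $\mathcal{S}_{ad}$, the infimum $\displaystyle m:=\inf_{\mathcal{S}_{ad}}J$ is a finite nonnegative number, and we may pick a minimizing sequence $\{(u_n,v_n,f_n)\}_{n\in\mathbb{N}}\subset\mathcal{S}_{ad}$ with $J(u_n,v_n,f_n)\to m$.

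Next I would extract uniform bounds. From the structure of $J$ and the hypothesis (either $N>0$, so the term $\frac{N}{4}\|f_n\|_{L^4(Q_c)}^4$ is controlled by $J(u_n,v_n,f_n)\le m+1$; or $\mathcal{F}$ bounded in $L^4(Q_c)$ directly), the sequence $\{f_n\}$ is bounded in $L^4(Q_c)$, hence extending by zero it is bounded in $L^4(Q)$. Then Theorem \ref{strong_solution}, and in particular the estimate (\ref{bound_solution}) with the constant $\mathcal{K}_1$ depending on the data and on $\|f_n\|_{L^4(Q)}$, gives a uniform bound for $(u_n,v_n)$ in $\mathcal{Y}_u\times\mathcal{Y}_v$. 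Since $\mathcal{F}$ is closed and convex in $L^4(Q_c)$, it is weakly closed; so up to a subsequence $f_n\rightharpoonup\tilde f$ weakly in $L^4(Q_c)$ with $\tilde f\in\mathcal{F}$. Using the bounds in $\mathcal{Y}_u\times\mathcal{Y}_v$ together with the compact embeddings already established in the proof of Lemma \ref{compact} (namely $\mathcal{Y}_u\hookrightarrow\hookrightarrow \mathcal{X}_u$, in particular into $C([0,T];L^2(\Omega))$ and $L^{8/3}(0,T;W^{1,8/3}(\Omega))$, and $\mathcal{Y}_v\hookrightarrow\hookrightarrow C([0,T]\times\overline{\Omega})$), we get, up to a further subsequence, $u_n\rightharpoonup\tilde u$ weakly in $\mathcal{Y}_u$ and strongly in $\mathcal{X}_u$, and $v_n\rightharpoonup\tilde v$ weakly in $\mathcal{Y}_v$ (weakly-$*$ in the $L^\infty$ components) and strongly in $C([0,T]\times\overline{\Omega})$; in particular $u_n,v_n\ge0$ passes to the limit, as do the initial conditions.

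Then I would pass to the limit in the state system to show $(\tilde u,\tilde v,\tilde f)\in\mathcal{S}_{ad}$. The linear terms $\partial_t u_n,\Delta u_n,\partial_t v_n,\Delta v_n, v_n, u_n$ converge weakly in the appropriate $L^p(Q)$ spaces. For the nonlinear terms: $\nabla\cdot(u_n\nabla v_n)=u_n\Delta v_n+\nabla u_n\cdot\nabla v_n$, where $u_n\to\tilde u$ strongly in $L^4(Q)$ and $\Delta v_n\rightharpoonup\Delta\tilde v$ weakly in $L^4(Q)$, so $u_n\Delta v_n\rightharpoonup\tilde u\Delta\tilde v$ weakly in $L^2(Q)$; and $\nabla u_n\to\nabla\tilde u$ strongly in $L^{8/3}(Q)$ while $\nabla v_n\rightharpoonup\nabla\tilde v$ weakly in $L^8(Q)$, giving $\nabla u_n\cdot\nabla v_n\rightharpoonup\nabla\tilde u\cdot\nabla\tilde v$ weakly in $L^2(Q)$. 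For the control term $f_n v_n$: $f_n\rightharpoonup\tilde f$ weakly in $L^4(Q)$ and $v_n\to\tilde v$ strongly in $L^\infty(Q)$ (hence strongly in every $L^p(Q)$), so $f_n v_n\rightharpoonup\tilde f\tilde v$ weakly in $L^4(Q)$ (product of weakly and strongly convergent sequences). Hence $(\tilde u,\tilde v)$ is a strong solution of (\ref{eq1})-(\ref{eq3}) with control $\tilde f$, so $(\tilde u,\tilde v,\tilde f)\in\mathcal{S}_{ad}$. Finally, since $u_n\to\tilde u$ in $L^2(Q_d)$ and $v_n\to\tilde v$ in $L^2(Q_d)$ the two tracking terms of $J$ converge, while $f\mapsto\|f\|_{L^4(Q_c)}^4$ is convex and strongly continuous hence weakly lower semicontinuous; therefore $J(\tilde u,\tilde v,\tilde f)\le\liminf_n J(u_n,v_n,f_n)=m$, which forces equality and shows $(\tilde u,\tilde v,\tilde f)$ is a global optimal solution. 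The only genuinely delicate point is making sure every nonlinear product passes to the limit, i.e.\ pairing each weakly convergent factor with a strongly convergent one using exactly the compactness of $\mathcal{Y}_u\times\mathcal{Y}_v$ into $\mathcal{X}_u\times\mathcal{X}_v$ from Lemma \ref{compact}; everything else is routine.
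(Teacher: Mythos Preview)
Your proposal is correct and follows essentially the same approach as the paper: nonemptiness of $\mathcal{S}_{ad}$ via Theorem~\ref{strong_solution}, a minimizing sequence, uniform $L^4(Q_c)$ bounds on $f_n$ from the hypothesis, uniform $\mathcal{Y}_u\times\mathcal{Y}_v$ bounds from (\ref{bound_solution}), weak/weak-$*$ convergence plus compactness (Aubin--Lions/Simon, exactly as in Lemma~\ref{compact}) to pass to the limit in the state system, and weak lower semicontinuity of $J$. The only cosmetic difference is that the paper identifies the weak limits of the nonlinear products $\nabla\cdot(u_m\nabla v_m)$ and $f_m v_m$ a posteriori, whereas you argue directly via strong~$\times$~weak convergence of the factors; both arguments are equivalent.
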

\begin{proof}
From Theorem \ref{strong_solution} we deduce that $\mathcal{S}_{ad}\neq\emptyset$. Let $\{s_ m\}_{m\in\mathbb{N}}=\{(u_m,v_m,f_m)\}_{m\in\mathbb{N}}\subset\mathcal{S}_{ad}$
a minimizing sequence of $J$, that is, $\displaystyle\lim_{m\rightarrow+\infty}J(s_m)=\inf_{s\in\mathcal{S}_{ad}}J(s)$. Then, by definition
of $\mathcal{S}_{ad}$,  for each $m\in\mathbb{N}$, 
$s_m$ satisfies the system
(\ref{eq1})-(\ref{eq3}), that is
\begin{eqnarray}
\partial_tu_m-\Delta u_m&=&\nabla\cdot(u_m\nabla v_m)\quad a.e.\ (t,x)\in Q,\label{op1}\\
\partial_tv_m-\Delta v_m+v_m&=&u_m+f_m v_m\quad a.e.\ (t,x)\in Q,\label{op2}\\
u_m(0)&=&u_0,\ v_m(0)=v_0\quad \hbox{in $\Omega$},\\
\dfrac{\partial u_m}{\partial{\bf n}}&=&0,\quad \dfrac{\partial v_m}{\partial{\bf n}}=0\ \mbox{ on }(0,T)\times\partial\Omega.\label{op22}
\end{eqnarray}
From the definition of $J$ and the assumption $N>0$  or  $\mathcal{F}$ is bounded in $L^4(Q_c)$, it follows that 
\begin{equation}\label{bound_F}
\{f_m\}_{m\in\mathbb{N}}\mbox{ is bounded in }L^{4}(Q_c).
\end{equation}
Also, from  (\ref{bound_solution}) there exists $C>0$, independent of $m$, such that
\begin{equation}\label{bound_u_v}
\|\partial_tu_m,\partial_tv_m\|_{L^2(Q)\times L^4(Q)}+\|u_m,v_m\|_{C(H^1\times W^{3/2,4}_{\bf n})}+\|u_m\|_{L^2(0,T;H^2(\Omega))}+\|v_m\|_{L^4(0,T;W^{2,4}(\Omega))}\le C.
\end{equation}

Therefore, from (\ref{bound_F}), (\ref{bound_u_v}), and taking into account that $\mathcal{F}$ is a closed convex subset of $L^4(Q_c)$
(hence is weakly closed in $L^4(Q_c)$), we deduce that there exists 
$\tilde{s}=(\tilde{u},\tilde{v},\tilde{f})\in 
 \mathcal{M}$
such that, for some subsequence of $\{s_m\}_{m\in\mathbb{N}}$, still denoted by  $\{s_m\}_{m\in\mathbb{N}}$, 
the following convergences hold, as $m\rightarrow+\infty$:
\begin{eqnarray}
u_m&\rightarrow&\tilde{u}\quad \mbox{weak in }L^{2}(0,T;H^2(\Omega))\mbox{ and weak * in }L^\infty(0,T;H^1(\Omega)),\label{c2}\\
v_m&\rightarrow&\tilde{v} \quad\mbox{weak in }L^4(0,T;W^{2,4}(\Omega))\mbox{ and weak * in }L^\infty(0,T;W^{3/2,4}_{\bf n}(\Omega)),\label{c3}\\
\partial_tu_m&\rightarrow&\partial_t\tilde{u} \quad\mbox{weak in } L^2(Q),\label{c4}\\
\partial_tv_m&\rightarrow&\partial_t\tilde{v} \quad\mbox{weak in } L^4(Q),\label{c5}\\
f_m&\rightarrow&\tilde{f} \quad \mbox{weak in } L^4(Q_c),\mbox{ and }\tilde{f}\in \mathcal{F}.\label{c6}
\end{eqnarray}

From (\ref{c2})-(\ref{c5}), the Aubin-Lions lemma (see \cite{lions}, Th\'eor\`eme 5.1, p.58)  and  using the Corollary 4 of \cite{simon} we have
\begin{eqnarray}
u_m&\rightarrow&\tilde{u}\quad\mbox{strongly in }C([0,T];L^2(\Omega))\cap L^2(0,T;H^1(\Omega)),\label{c7}\\
v_m&\rightarrow&\tilde{v}\quad\mbox{strongly in }C([0,T];L^4(\Omega))\cap L^4(0,T;W^{3/2,4}_{\bf n}(\Omega)).\label{c8}
\end{eqnarray}
In particular, 
since $\nabla \cdot \left(u_m \, \nabla v_m\right)= \nabla  u_m \cdot \nabla v_m + u_m \, \Delta v_m$
is bounded in $L^4(0,T;L^2(\Omega))$ and $f_m\, v_m$ is bounded in $L^4(Q_c)$, then one has the weak convergences
$$
\begin{array}{rcl}
\nabla\cdot(u_m\nabla v_m )&\rightarrow& \chi_1 \quad \hbox{ weak in $ L^4(0,T;L^2(\Omega))$},
\\
f_m v_m &\rightarrow&\chi_2 \quad \hbox{ weak in $ L^{4}(Q_c)$}.
\end{array}
$$
On the other hand, from (\ref{c2})-(\ref{c8}) one has:
$$
\begin{array}{rcl}
u_m\nabla v_m &\rightarrow&\tilde{u}\nabla\tilde{v} \quad \hbox{ weak in $L^2(0,T;L^4(\Omega))$},
\\
f_m v_m &\rightarrow& \tilde{f}\tilde{v} \quad \hbox{ weak in $L^4(0,T;L^2(\Omega_c))$}.
\end{array}
$$
Therefore, we can identify $\chi_1=\nabla\cdot(\tilde{u}\nabla\tilde{v})$ and $\chi_2=\tilde{f}\tilde{v}$ a.~e.~in $Q$, and thus:
\begin{eqnarray}
\nabla\cdot(u_m\nabla v_m )&\rightarrow& \nabla\cdot(\tilde{u}\nabla\tilde{v}) \quad \hbox{ weak in $ L^4(0,T;L^2(\Omega))$},\label{c8-0}\\
f_m v_m &\rightarrow& \tilde{f}\tilde{v} \quad \hbox{ weak in $ L^{4}(Q_c)$}.\label{c8-1}
\end{eqnarray}

Moreover, from (\ref{c7}) and (\ref{c8}) we have that $(u_m(0),v_m(0))$ converges to 
$(\tilde{u}(0),\tilde{v}(0))$ in $L^2(\Omega)\times L^4(\Omega)$, and since $u_m(0)=u_0$, $v_m(0)=v_0$, we deduce that $\tilde{u}(0)=u_0$ 
and $\tilde{v}(0)=v_0$, thus $\tilde{s}$ satisfies the initial conditions given in (\ref{eq2}). 
Therefore, considering the convergences (\ref{c2})-(\ref{c8-1}), we can pass to the limit in (\ref{op1})-(\ref{op22}) as $m$ goes to $+\infty$,
and we conclude that $\tilde{s}=(\tilde{u},\tilde{v},\tilde{f})$ is solution of the system pointwisely (\ref{eq1})-(\ref{eq3}), that is, $\tilde{s}\in\mathcal{S}_{ad}$. Therefore,
\begin{equation}\label{op20}
\lim_{m\rightarrow+\infty}J(s_m)=\inf_{s\in\mathcal{S}_{ad}}J(s)\le J(\tilde{s}).
\end{equation}
On the other hand, since $J$ is lower semicontinuous on $\mathcal{S}_{ad}$, we have $J(\tilde{s})\le \displaystyle\liminf_{m\rightarrow+\infty} J(s_m)$, which jointly to  
(\ref{op20}), implies (\ref{solution}).
\end{proof}

\subsection{Optimality System Related to Local Optimal Solutions}

In this subsection we will derive the first-order necessary optimality conditions for
a local optimal solution $(\tilde{u},\tilde{v},\tilde{f})$ of problem (\ref{func}), applying a Lagrange multipliers theorem.
We will base on a generic result given by Zowe et al  \cite{zowe} on existence of Lagrange multipliers in Banach spaces.
In order to introduce the concepts and results given in \cite{zowe} we consider the following optimization problem
\begin{equation}\label{abs1}
\min
J(s)\ \mbox{ subject to }
s \in \mathcal{S}=\{ s
\in \mathcal{M}\,:\, G( s
)\in \mathcal{N}\}.
\end{equation}
where $J:X\rightarrow\mathbb{R}$ is a functional, $G:X\rightarrow Y$ is an operator,
$X$ and $Y$ are Banach spaces,  $\mathcal{M}$ is a nonempty closed convex subset of $X$ and $\mathcal{N}$ is a nonempty closed convex cone in $Y$ with vertex at the origin.

For a subset $A$ of $X$ (or $Y$), $A^+$ denotes its polar cone, that is
$$
A^+=\{\rho\in X'\,:\, \langle \rho, a\rangle_{X'}\ge 0,\ \forall a\in A\}.
$$
\begin{definition}\label{local-solution}
We say that $\tilde{s}\in\mathcal{S}$ is a local optimal solution of problem (\ref{abs1}), if there exits $\varepsilon>0$ such that for all
$x\in\mathcal{S}$ satisfying $\|x-\tilde{x}\|_X\le\varepsilon$ one has that $J(\tilde{x})\le J(x)$.
\end{definition}

\begin{definition}\label{abs2}
Let $\tilde{s}\in\mathcal{S}$ be a local  optimal solution for problem (\ref{abs1}) 
with respect to the $X$-norm. 
Suppose that $J$ and $G$ are Fr\'echet differentiable in $\tilde{s}$, with derivatives
$J'(\tilde{s})$ and $G'(\tilde{s})$, respectively. Then, any $\lambda\in Y'$ is called a Lagrange multiplier for (\ref{abs1}) at the point $\tilde{s}$ if
\begin{equation}\label{abs3}
\left\{
\begin{array}{l}
\lambda\in \mathcal{N}^+,\\
\langle\lambda, G(\tilde{s})\rangle_{Y'}=0,\\
J'(\tilde{s})-\lambda\circ G'(\tilde{s})\in \mathcal{C}(\tilde{x})^+,
\end{array}
\right.
\end{equation}
where $\mathcal{C}(\tilde{s})=\{\theta( s- \tilde{s})\,:
\,  s \in \mathcal{M},\, \theta\ge0\}$ is the conical hull of $ \tilde{s}$ in $\mathcal{M}$. 
\end{definition}
\begin{definition}\label{abs4}
Let $\tilde{s}\in\mathcal{S}$ be a local optimal solution for problem (\ref{abs1}). We say that $\tilde{s}$ is a regular point if
\begin{equation}\label{abs5}
G'(\tilde{s})[\mathcal{C}(\tilde{s})]-\mathcal{N}(G(\tilde{s}))=Y,
\end{equation}
where $\mathcal{N}(G(\tilde{s}))
=\{(\theta(n-G(\tilde{s}))\,:\, n\in \mathcal{N},\, \theta\ge0\}$ 
is the conical hull of $G(\tilde{s})$ in $\mathcal{N}$.  
\end{definition}
\begin{theorem}(\cite{zowe}, Theorem 3.1)\label{abs6}
Let $\tilde{s}\in\mathcal{S}$ be a local  optimal solution for problem (\ref{abs1}).
Suppose that $J$ is a Fr\'echet differentiable function and $G$ is continuous Fr\'echet-differentiable.
If $\tilde{s}$ is a regular point, then the set of Lagrange multipliers for (\ref{abs1}) at $\tilde{s}$
is nonempty. 
\end{theorem}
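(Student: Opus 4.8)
We only outline the structure of the argument, since this is a classical abstract result whose full proof is in \cite{zowe}; the plan is to reduce the assertion to a Hahn--Banach separation after a linearization step. Write $\mathcal{C}:=\mathcal{C}(\tilde s)$ and $\mathcal{N}_0:=\mathcal{N}(G(\tilde s))$ for the two conical hulls appearing in (\ref{abs3})--(\ref{abs5}); both are convex cones with vertex at the origin, because $\mathcal{M}$ and $\mathcal{N}$ are convex and $\mathcal{N}$ is a cone. First I would introduce the linearizing cone
\[
\mathcal{L}:=\overline{\{\,h\in\mathcal{C}\ :\ G'(\tilde s)\,h\in\mathcal{N}_0\,\}}\subset X,
\]
and establish the key linearization lemma: under the regularity hypothesis (\ref{abs5}), every $h\in\mathcal{L}$ is tangent to the feasible set $\mathcal{S}$ at $\tilde s$, i.e.\ there is a curve $t\mapsto s(t)\in\mathcal{S}$, defined for $t\ge 0$ small, with $s(0)=\tilde s$ and $\bigl(s(t)-\tilde s\bigr)/t\to h$ as $t\to 0^{+}$. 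This is exactly where the hypothesis is used: (\ref{abs5}) is Robinson's constraint qualification for the system $s\in\mathcal{M}$, $G(s)\in\mathcal{N}$, and a generalized open mapping / stability theorem for closed convex processes (of Lyusternik--Graves or Robinson--Ursescu type), combined with the $C^1$-regularity of $G$, turns it into metric regularity of that system near $\tilde s$, which in turn produces the feasible curves.

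Granting this, the second step is routine. Since $\tilde s$ is a local optimal solution on $\mathcal{S}$ and $J$ is Fr\'echet differentiable, the inequalities $J(s(t))\ge J(\tilde s)$ for small $t$ give, after dividing by $t$ and letting $t\to 0^{+}$, that $\langle J'(\tilde s),h\rangle_{X'}\ge 0$ for all $h\in\mathcal{L}$; that is, $J'(\tilde s)\in\mathcal{L}^{+}$. The third step is the dualization: by construction $\mathcal{L}$ is (the closure of) $\mathcal{C}\cap\bigl(G'(\tilde s)\bigr)^{-1}(\mathcal{N}_0)$, and the same regularity hypothesis (\ref{abs5}) — again through the open mapping theorem — guarantees that the dual cone splits without any extra closure,
\[
\mathcal{L}^{+}=\mathcal{C}^{+}+\bigl(G'(\tilde s)\bigr)^{*}\bigl(\mathcal{N}_0^{+}\bigr),
\]
while a direct computation from the definition of $\mathcal{N}_0$ yields $\mathcal{N}_0^{+}=\{\lambda\in\mathcal{N}^{+}\ :\ \langle\lambda,G(\tilde s)\rangle_{Y'}=0\}$. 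Combining the two displays, from $J'(\tilde s)\in\mathcal{L}^{+}$ one extracts $\lambda\in Y'$ with $\lambda\in\mathcal{N}^{+}$, $\langle\lambda,G(\tilde s)\rangle_{Y'}=0$ and $J'(\tilde s)-\lambda\circ G'(\tilde s)\in\mathcal{C}^{+}$, which are precisely the three conditions (\ref{abs3}); hence $\lambda$ is a Lagrange multiplier and the multiplier set is nonempty.

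The main obstacle is clearly this linearization/dualization machinery: both the construction of feasible curves tangent to prescribed linearized directions and the closedness of the sum $\mathcal{C}^{+}+\bigl(G'(\tilde s)\bigr)^{*}(\mathcal{N}_0^{+})$ have to be extracted from the purely algebraic surjectivity statement (\ref{abs5}), and this needs a quantitative open mapping theorem for convex multifunctions rather than the elementary Hahn--Banach theorem alone; the continuous differentiability of $G$ is what lets one pass from linearized solvability to the nonlinear one. Everything else — the variational inequality and the bookkeeping with polar cones — is elementary. Since in this paper the result is used only as a tool to obtain the optimality system associated with (\ref{func}), we simply invoke it in the form stated above and refer to \cite{zowe} for the complete proof.
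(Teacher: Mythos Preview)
The paper does not give a proof of this theorem at all: it is quoted verbatim as Theorem~3.1 of \cite{zowe} and used as a black box to derive the optimality system in Theorem~\ref{lagrange}. Your outline is a faithful sketch of the Zowe--Kurcyusz argument (linearizing cone, Robinson-type regularity giving tangency of linearized directions, first-order necessary condition $J'(\tilde s)\in\mathcal{L}^{+}$, and the polar-cone decomposition $\mathcal{L}^{+}=\mathcal{C}^{+}+(G'(\tilde s))^{*}(\mathcal{N}_0^{+})$), and your identification of the nontrivial step --- the open-mapping/metric-regularity ingredient that turns the surjectivity condition (\ref{abs5}) into both the tangency lemma and the closedness of the dual sum --- is correct. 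Since the paper merely invokes the result, your final sentence is exactly the right stance here.
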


Now, we will reformulate the optimal control problem (\ref{func}) in the abstract setting  (\ref{abs1}).
We consider the following Banach spaces
\begin{equation}\label{spaces_X_Y}
 X:=\mathcal{W}_u\times\mathcal{W}_v\times L^4(Q_c),\ 
Y:=L^2(Q)\times L^4(Q)\times H^1(\Omega)\times W^{3/2,4}_{\bf n}(\Omega),
\end{equation}
where
\begin{eqnarray}
\mathcal{W}_u&:=&\left\{u\in \mathcal{Y}_u\,:\,\dfrac{\partial u}{\partial{\bf n}}=0\ \mbox{ on }(0,T)\times\partial\Omega\right\},\label{space_state_u}\\
\mathcal{W}_v&:=&\left\{v\in \mathcal{Y}_v\,:\, \frac{\partial v}{\partial{\bf n}}=0\, \mbox{ on }(0,T)\times\partial\Omega\right\},\label{space_state_v}
\end{eqnarray} 
and the  operator $
{G}=(G_1,G_2,G_3,G_4):
X
\rightarrow
Y
$, where 
\begin{equation*}
\ G_1: X
\rightarrow L^2(Q),
\ G_2 : X
\rightarrow L^4(Q),
\ G_3: X
\rightarrow H^1(\Omega),
\ G_4: X
\rightarrow W^{3/2,4}_{\bf n}(\Omega)
\end{equation*}
are defined at each point $s=(u,v,f)\in
X$ by
\begin{equation}\label{restriction}
\left\{
\begin{array}{l}
G_1(s)=\partial_tu-\Delta u-\nabla\cdot(u\nabla v),\vspace{0.1cm}\\
G_2(s)=\partial_tv-\Delta v+v-u-fv,\vspace{0.1cm}\\
G_3(s)=u(0)-u_0,\vspace{0.1cm}\\
G_4(s)=v(0)-v_0.
\end{array}
\right.
\end{equation}

By taking $
\mathcal{M}
:=\mathcal{W}_u\times\mathcal{W}_v\times\mathcal{F}$ a closed convex subset of 
$
X$ and $\mathcal{N}=\{{\bf 0}\}$, the optimal control problem (\ref{func}) is reformulated  as follows
\begin{equation}\label{problem-1}
\min
J(s)\mbox{ subject to }
s \in \mathcal{S}_{ad}=\{s=(u,v,f)\in
\mathcal{M}
\,:\, {G}(s)={\bf 0}\}.
\end{equation}
Concerning to differentiability of the constraint operator ${G}$ and the functional $J$ we have the following results.
\begin{lemma}\label{derivative_J}
The functional $J:
X\rightarrow\mathbb{R}$ is 
Fr\'echet differentiable and the Fr\'echet derivative of $J$ in
$\tilde{s}=(\tilde{u},\tilde{v},\tilde{f})\in
X$ in the direction $r=(U,V,F)\in
X$ is given by
\begin{equation}\label{deri_J}
J'(\tilde{s})[r]=\alpha_u\int_0^T\int_{\Omega_d}(\tilde{u}-u_d)U\,dxdt+\alpha_v\int_0^T\int_{\Omega_d}(\tilde{v}-v_d)V\,dxdt+N\int_0^T\int_{\Omega_c}(\tilde{f})^3F\,dxdt.
\end{equation}
\end{lemma}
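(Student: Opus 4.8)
The plan is to show that $J$ is the sum of three terms, each of which is a continuous quadratic (or quartic) form on the relevant Banach space, and therefore Fréchet differentiable, and then to compute the derivative by expanding $J(\tilde s + r)$ and collecting the linear-in-$r$ part. First I would write $J = J_1 + J_2 + J_3$ with
\[
J_1(u,v,f) = \frac{\alpha_u}{2}\int_0^T\!\!\int_{\Omega_d}|u-u_d|^2, \quad
J_2(u,v,f) = \frac{\alpha_v}{2}\int_0^T\!\!\int_{\Omega_d}|v-v_d|^2, \quad
J_3(u,v,f) = \frac{N}{4}\int_0^T\!\!\int_{\Omega_c}|f|^4.
\]
Since $\mathcal{W}_u \hookrightarrow \mathcal{Y}_u \hookrightarrow L^2(0,T;H^2(\Omega)) \hookrightarrow L^2(Q)$, the restriction map $u\mapsto u|_{Q_d}$ is a bounded linear operator from $\mathcal{W}_u$ into $L^2(Q_d)$, so $J_1$ is (the restriction of) a continuous quadratic functional composed with a bounded linear map plus an affine shift; an analogous remark applies to $J_2$ using $\mathcal{W}_v \hookrightarrow L^2(Q_d)$. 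For $J_3$, the control variable $f$ lives in $L^4(Q_c)$ and $J_3(f) = \frac{N}{4}\|f\|_{L^4(Q_c)}^4$, so this is essentially the computation of the derivative of the quartic power of the $L^4$-norm.

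Next I would carry out the elementary expansions. For $J_1$, writing $r=(U,V,F)$,
\[
J_1(\tilde s + r) - J_1(\tilde s) = \alpha_u\int_0^T\!\!\int_{\Omega_d}(\tilde u - u_d)\,U + \frac{\alpha_u}{2}\int_0^T\!\!\int_{\Omega_d}|U|^2,
\]
and the last term is $O(\|U\|_{L^2(Q_d)}^2) = o(\|r\|_X)$ by the embedding noted above, which identifies the first term as $J_1'(\tilde s)[r]$. The computation for $J_2$ is identical with $u,U,u_d,\alpha_u$ replaced by $v,V,v_d,\alpha_v$. For $J_3$ I would use the pointwise identity $|a+b|^4 = |a|^4 + 4|a|^2 a\,b + O(a^2 b^2 + b^4)$ (more precisely, expand $(a+b)^4$ over $\mathbb{R}$), integrate over $Q_c$, and bound the remainder by $C(\|\tilde f\|_{L^4(Q_c)}^2\|F\|_{L^4(Q_c)}^2 + \|F\|_{L^4(Q_c)}^4)$ via Hölder; this is $o(\|F\|_{L^4(Q_c)}) = o(\|r\|_X)$, leaving $N\int_0^T\int_{\Omega_c}(\tilde f)^3 F$ as $J_3'(\tilde s)[r]$. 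Summing the three contributions gives exactly (\ref{deri_J}).

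There is essentially no serious obstacle here: the only point requiring a little care is the remainder estimate for the quartic term, where one must make sure the cross terms are genuinely superlinear in $\|F\|_{L^4(Q_c)}$ — this follows from Hölder's inequality since $(\tilde f)^2 F^2 \in L^1(Q_c)$ with $\|(\tilde f)^2 F^2\|_{L^1} \le \|\tilde f\|_{L^4}^2\|F\|_{L^4}^2$, and similarly $\|(\tilde f)^3 F\|_{L^1}\le \|\tilde f\|_{L^4}^3\|F\|_{L^4}$ shows the claimed linear term is a bounded functional of $r$. Finally, since the derivative formula (\ref{deri_J}) depends continuously (indeed, continuously and even smoothly) on $\tilde s$ through the maps $\tilde u\mapsto \tilde u|_{Q_d}$, $\tilde v\mapsto \tilde v|_{Q_d}$, and $\tilde f\mapsto (\tilde f)^3\in L^{4/3}(Q_c) = (L^4(Q_c))'$, the functional $J$ is in fact continuously Fréchet differentiable, which is what will be needed later when applying Theorem \ref{abs6}.
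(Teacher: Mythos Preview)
Your proof is correct and complete; the paper itself states this lemma without proof, so there is nothing to compare against. Your argument---splitting $J$ into its three summands, using the embeddings $\mathcal{W}_u,\mathcal{W}_v\hookrightarrow L^2(Q)$ for the quadratic terms, and directly expanding $(\tilde f+F)^4$ with H\"older bounds on the remainder for the quartic term---is exactly the standard computation one would supply here.
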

\begin{lemma}\label{derivative_R}
The operator ${G}:
 X\rightarrow
Y$ is 
continuous-Fr\'echet differentiable and the Fr\'echet derivative of ${G}$ in
$\tilde{s}=(\tilde{u},\tilde{v},\tilde{f})\in
X$, in the direction $r=(U,V,F)\in
 X$, is the linear operator
$${G}'(\tilde{s})[r]=(G_1'(\tilde{s})[r],G_2'(\tilde{s})[r],G_3'(\tilde{s})[r],G_4'(\tilde{s})[r])$$ defined by 
\begin{equation}\label{deri_R}
\left\{
\begin{array}{l}
G_1'(\tilde{s})[r]=\partial_tU-\Delta U-\nabla\cdot(U\nabla \tilde{v})-\nabla\cdot(\tilde{u}\nabla V),\vspace{0.1cm}\\
G_2'(\tilde{s})[r]=\partial_tV-\Delta V+V-U-\tilde{f}V-F\tilde{v},\vspace{0.1cm}\\
G_3'(\tilde{s})[r]=U(0),\vspace{0.1cm}\\
G_4'(\tilde{s})[r]=V(0).
\end{array}
\right.
\end{equation}
\end{lemma}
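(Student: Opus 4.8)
The plan is to split the operator $G$ into its four components and treat each separately, since $G_3$ and $G_4$ are trivial (they are affine and continuous from $X$ into $H^1(\Omega)$ and $W^{3/2,4}_{\bf n}(\Omega)$, respectively, because the trace-at-$t=0$ maps $u\mapsto u(0)$ and $v\mapsto v(0)$ are bounded linear from $\mathcal{W}_u$ and $\mathcal{W}_v$ into those spaces—recall $\mathcal{Y}_u\hookrightarrow C([0,T];H^1(\Omega))$ and $\mathcal{Y}_v\hookrightarrow C([0,T];W^{3/2,4}_{\bf n}(\Omega))$). The only genuine content is in $G_1$ and $G_2$. Here I would observe that both are the sum of a bounded linear part ($\partial_tU-\Delta U$, respectively $\partial_tV-\Delta V+V-U$) and a bilinear part: for $G_1$ the bilinear map is $B_1(s,s)$ where $B_1((u_1,v_1),(u_2,v_2))=\nabla\cdot(u_1\nabla v_2)$, and for $G_2$ the bilinear map is $B_2((u_1,v_1,f_1),(u_2,v_2,f_2))=f_1 v_2$. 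For a continuous bilinear map $B$, one has the exact identity $B(s+r,s+r)-B(s,s)=B(r,s)+B(s,r)+B(r,r)$, so $B$ is Fréchet differentiable with derivative $r\mapsto B(r,s)+B(s,r)$ and the remainder $B(r,r)=o(\|r\|)$; moreover the derivative depends continuously (indeed linearly, hence Lipschitz) on $s$, which gives continuous differentiability. Adding the linear parts (whose derivatives are themselves) then yields exactly formula (\ref{deri_R}).

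The step requiring actual work is verifying that the two bilinear maps are indeed bounded between the relevant spaces. For $B_2$: if $f\in L^4(Q_c)$ and $v\in\mathcal{W}_v\hookrightarrow L^\infty(Q)$, then $fv\in L^4(Q)$ with $\|fv\|_{L^4(Q)}\le\|v\|_{L^\infty(Q)}\|f\|_{L^4(Q_c)}$, so $B_2:X\times X\to L^4(Q)$ is bounded—this reuses exactly the embedding already exploited in Lemma \ref{compact}. For $B_1$, I would expand $\nabla\cdot(u\nabla v)=u\,\Delta v+\nabla u\cdot\nabla v$ and bound it in $L^2(Q)$ as was done verbatim in Step 5 of Lemma \ref{cot-1} and in (\ref{fij_6}): using $u\in\mathcal{Y}_u$ one has $u\in L^4(Q)$ and $\nabla u\in L^4(Q)$, while $v\in\mathcal{Y}_v$ gives $\Delta v\in L^4(Q)$ and $\nabla v\in L^8(Q)$, so $\|u\,\Delta v\|_{L^2(Q)}\le\|u\|_{L^4(Q)}\|\Delta v\|_{L^4(Q)}$ and $\|\nabla u\cdot\nabla v\|_{L^2(Q)}\le\|\nabla u\|_{L^4(Q)}\|\nabla v\|_{L^8(Q)}$, both controlled by $\|u\|_{\mathcal{Y}_u}\|v\|_{\mathcal{Y}_v}$. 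Hence $B_1:X\times X\to L^2(Q)$ is bounded bilinear. Since the $\partial_t-\Delta$ (and $\partial_t-\Delta+{\rm Id}-{\rm projection}$) parts are plainly bounded linear from $\mathcal{W}_u$ (resp.\ $\mathcal{W}_v$, together with the $-U$ term from $\mathcal{W}_u$) into $L^2(Q)$ (resp.\ $L^4(Q)$) by the very definition of $\mathcal{Y}_u,\mathcal{Y}_v$, the claim follows.

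I do not anticipate a serious obstacle; the main point to be careful about is bookkeeping with the function spaces—checking that each product genuinely lands in $L^2(Q)$ or $L^4(Q)$ and that no term falls outside the codomain $Y$—but all the required embeddings and Hölder estimates have already been established in Sections \ref{sec:2} and \ref{sec:Existence}, so the argument is essentially a reorganization of estimates already in hand. I would therefore present the proof concisely: state the decomposition into linear plus bilinear parts, invoke the elementary differentiability of bounded bilinear maps, cite the relevant embeddings for boundedness of $B_1$ and $B_2$, and read off (\ref{deri_R}).
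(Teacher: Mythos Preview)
Your proposal is correct; the paper states Lemma~\ref{derivative_R} without proof, treating it as a routine computation, and your argument---decomposing each component of $G$ into a bounded linear part plus a bounded bilinear part, then invoking the elementary Fr\'echet differentiability of continuous bilinear maps---is precisely the standard way to fill in this gap. The embeddings and H\"older estimates you cite for the boundedness of $B_1$ and $B_2$ are exactly those already established in Lemma~\ref{compact} and Step~5 of Lemma~\ref{cot-1}, so nothing new is needed.
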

We wish to prove the existence of Lagrange multipliers,
which is guaranteed if a  local optimal solution of  problem (\ref{problem-1}) is a regular point  of operator  ${G}$
(see Theorem \ref{abs6}).
\begin{remark}\label{regular_point}
Since in the problem (\ref{problem-1}) $\mathcal{N}=\{{\bf 0}\}$, then $\mathcal{N}({G}(\tilde{s}))=\{{\bf 0}\}$. Thus, from Definition \ref{abs4}  we conclude that 
$\tilde{s}=(\tilde{u},\tilde{v},\tilde{f})\in\mathcal{S}_{ad}$
is a regular point  if for all $(g_u,g_v,U_0,V_0)\in
Y$ there exists $r=(U,V,F)\in \mathcal{W}_u\times\mathcal{W}_v\times\mathcal{C}(\tilde{f})$
such that
\begin{equation}\label{regular}
{G}'(\tilde{s})[r]=(g_u,g_v,U_0,V_0),
\end{equation}
where $\mathcal{C}(\tilde{f}):=\{\theta(f-\tilde{f})\,:\, \theta\ge0,\, f\in\mathcal{F}\}$ is the conical hull of  $\tilde{f}$ in $\mathcal{F}$. 
\end{remark}
\begin{lemma}\label{reg-2}
Let $\tilde{s}=(\tilde{u},\tilde{v},\tilde{f})\in\mathcal{S}_{ad}$, then $\tilde{s}$ is a regular point.
\end{lemma}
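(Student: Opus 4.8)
To show $\tilde s=(\tilde u,\tilde v,\tilde f)$ is a regular point, by Remark \ref{regular_point} I must solve, for arbitrary data $(g_u,g_v,U_0,V_0)\in Y$, the linearized system
\begin{equation*}
\left\{
\begin{array}{l}
\partial_tU-\Delta U-\nabla\cdot(U\nabla\tilde v)-\nabla\cdot(\tilde u\nabla V)=g_u,\\
\partial_tV-\Delta V+V-U-\tilde fV-F\tilde v=g_v,\\
U(0)=U_0,\quad V(0)=V_0,
\end{array}
\right.
\end{equation*}
with $(U,V,F)\in\mathcal W_u\times\mathcal W_v\times\mathcal C(\tilde f)$. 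The decisive simplification is that $0\in\mathcal F$ is not needed: it suffices to exhibit \emph{one} admissible $F$, and the natural choice is $F=0\in\mathcal C(\tilde f)$ (taking $\theta=0$). Thus the problem reduces to solving the \emph{decoupled-in-structure} linear parabolic system above with $F\equiv0$, i.e. to a linear surrogate of the state system, and then invoking the regularity theory of Lemma \ref{feireisl} together with the a priori estimates already developed in Section \ref{sec:Existence}.

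\textbf{Construction of the solution.} I would solve the system by the same Leray--Schauder / fixed-point scheme used for Theorem \ref{strong_solution}, now in its linear (hence easier) version: given $\bar V$ in a suitable space, first solve the $V$-equation $\partial_tV-\Delta V+V=g_v+U_{\mathrm{prev}}+\tilde f V$ (treating $\tilde f V$ via $\tilde f\in L^4(Q)$ and $V\in L^\infty(0,T;H^1)$ exactly as in Step 4 of Lemma \ref{cot-1}, bootstrapping through $p=7/2$ then $p=4$ in Lemma \ref{feireisl} to get $V\in\mathcal Y_v$), then solve the $U$-equation $\partial_tU-\Delta U=g_u+\nabla\cdot(U\nabla\tilde v)+\nabla\cdot(\tilde u\nabla V)$ with $p=2$. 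Since $\tilde u\in\mathcal Y_u$ and $\tilde v\in\mathcal Y_v$, the right-hand sides $\tilde u\Delta V+\nabla\tilde u\cdot\nabla V$ and $U\Delta\tilde v+\nabla U\cdot\nabla\tilde v$ lie in $L^2(Q)$ by the same Hölder/interpolation arguments (inequality (\ref{interpol})) as in the proof of Lemma \ref{compact}. One closes the fixed-point argument with energy estimates: test the $U$-equation by $U$ and the $V$-equation by $-\Delta V$, mimicking (\ref{uni-2})--(\ref{uni-7}) in the uniqueness proof — note the terms are of exactly the type treated there — to obtain bounds in $\mathcal Y_u\times\mathcal Y_v$ depending only on $\|g_u\|_{L^2(Q)}$, $\|g_v\|_{L^4(Q)}$, $\|U_0\|_{H^1}$, $\|V_0\|_{W^{3/2,4}_{\bf n}}$ and the fixed norms of $(\tilde u,\tilde v,\tilde f)$.

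\textbf{Main obstacle.} The routine estimates are not the difficulty; the delicate point is that the linearized $U$-equation carries the \emph{transport-type} term $\nabla\cdot(U\nabla\tilde v)$, and to absorb $\|\nabla U\cdot\nabla\tilde v\|_{L^2(Q)}$ when testing by $-\Delta U$ one needs $\nabla\tilde v\in L^\infty(0,T;L^4)\cap L^4(0,T;W^{1,4})\hookrightarrow L^8(Q)$, which is available precisely because $\tilde v\in\mathcal Y_v$ — so the argument must be run so that the $V$-regularity (up to $\mathcal Y_v$) is secured \emph{before} estimating $U$ in $\mathcal Y_u$. A secondary technical point is the compatibility of $U_0$ and $V_0$ with the Neumann boundary condition built into $\mathcal W_u,\mathcal W_v$: since $Y$ uses $H^1(\Omega)$ and $W^{3/2,4}_{\bf n}(\Omega)$ and $\widehat W^{2-2/p,p}$ for $p=2$ is $H^1(\Omega)$ (no boundary trace) while for $p=4$ it is $W^{3/2,4}_{\bf n}$, Lemma \ref{feireisl} applies directly with no compatibility obstruction, and the produced $(U,V)$ automatically satisfy the homogeneous Neumann conditions, hence lie in $\mathcal W_u\times\mathcal W_v$. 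Uniqueness of $(U,V)$ then follows from the same energy estimate applied to the difference, giving linearity and well-posedness, and in particular the surjectivity (\ref{regular}).
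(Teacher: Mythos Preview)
Your proposal is correct and follows essentially the same route as the paper: take $F=0\in\mathcal C(\tilde f)$, solve the resulting linear system (\ref{L-1}) by a Leray--Schauder fixed-point argument mirroring Section \ref{existence}, obtain the key energy bound by testing with $(U,-\Delta V)$ (the paper also tests (\ref{L-1aa})$_2$ by $V$ to close the $H^1$ estimate, which is equivalent to the average/Gronwall step in the uniqueness proof you cite), and then bootstrap $V$ into $\mathcal Y_v$ via Lemma \ref{feireisl} with $p=7/2$ then $p=4$ before pushing $U$ into $\mathcal Y_u$. Your identification of the ordering issue ($V$-regularity before $U$-regularity) and of the compatibility of the initial data with $\widehat W^{2-2/p,p}$ is exactly the point.
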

\begin{proof}
Fixed $(\tilde{u},\tilde{v},\tilde{f})\in \mathcal{S}_{ad}$, let $(g_u,g_v,U_0,V_0)\in
Y$. Since  $0\in\mathcal{C}(\tilde{f})$, it suffices to show the existence of $(U,V)\in\mathcal{Y}_u\times\mathcal{Y}_v$ such that
\begin{equation}\label{L-1}
\left\{\begin{array}{rcl}
\partial_tU-\Delta U-\nabla\cdot(U\nabla \tilde{v})-\nabla\cdot(\tilde{u}\nabla V)&=&g_u
 \quad \mbox{ in }Q,\\
\noalign{\vspace{-2ex}}\\
\partial_tV-\Delta V+V-U-\tilde{f}V&=&g_v \quad \mbox{ in }Q,\\
\noalign{\vspace{-2ex}}\\
U(0)&=&U_0,\ V(0)=V_0 \quad \mbox{ in }\Omega,\\
\noalign{\vspace{-2ex}}\\
\dfrac{\partial U}{\partial{\bf n}}&=&0,\ \dfrac{\partial V}{\partial {\bf n}}=0 \quad \mbox{ on }(0,T)\times\partial\Omega.
\end{array}
\right.
\end{equation} 
In order to prove the existence of solution of (\ref{L-1}), we can use the Leray-Schauder's fixed point argument over the operator $S:(\bar{U},\bar{V})\in \mathcal{X}_u \times \mathcal{X}_v \mapsto (U,V)\in  \mathcal{Y}_u \times \mathcal{Y}_v$ with $(U,V)$ the solution of the decoupled problem: 
$$
\left\{\begin{array}{rcl}
\partial_tU-\Delta U &=& \nabla\cdot(\tilde{u}\nabla {V}) + \nabla\cdot(\bar{U}\nabla \tilde{v})+g_u \quad\mbox{ in }Q,\\
\noalign{\vspace{-2ex}}\\
\partial_tV-\Delta V+V&=&  \bar{U}+\tilde{f}\bar{V} +g_v \quad \mbox{ in }Q,
\end{array}
\right.$$
endowed with the corresponding initial and boundary conditions. In fact, first we find $V$ and after $U$. 
%
%
Adapting Section \ref{existence}, we can prove that operator $S$ is well-defined  from $\mathcal{X}_u \times \mathcal{X}_v $ to $ \mathcal{Y}_u \times \mathcal{Y}_v $ and compact  from $\mathcal{X}_u \times \mathcal{X}_v $ to itself, due to the regularity 
$(g_u,g_v) \in L^2(Q)\times L^4(Q)$, $(U_0,V_0)\in H^1(\Omega)\times W^{3/2,4}_{\bf n}(\Omega)$, $(\tilde{u},\tilde{v})\in \mathcal{Y}_u \times \mathcal{Y}_v$ and $\tilde{f} \in L^4(Q)$.


It suffices to prove Lemma \ref{cot-1} but now defining $T_{\alpha}=\{
(U,V) \in \mathcal{Y}_u \times \mathcal{Y}_v: \, (U,V)=\alpha \, S(U,V) \, \mbox{ for some $\alpha\in [0,1]$}
\}$, which is very similar to the proof of Lemma \ref{cot-1}. In fact, now Steps 2 and 3 are easier and can be proved  jointly. 
%
%

With this objective, if $(U,V) \in T_{\alpha}$, then $(U,V)$ solves the coupled problem 
\begin{equation} \label{L-1aa}
\left\{\begin{array}{rcl}
\partial_tU-\Delta U - \nabla\cdot(\tilde{u}\nabla {V})&=&\alpha\nabla\cdot({U}\nabla \tilde{v})+ \alpha  g_u \quad\mbox{ in }Q,\\
\noalign{\vspace{-2ex}}\\
\partial_tV-\Delta V+V&=&  \alpha{U}+ \alpha\tilde{f}{V} + \alpha  g_v \quad \mbox{ in }Q,
\end{array}
\right.
\end{equation} 
endowed with the corresponding initial and boundary conditions. By testing by $(U,-\Delta V)$, one has
\begin{eqnarray}
\frac12\frac{d}{dt}\left(\|U\|^2+\|\nabla V\|^2\right)+\|\nabla U\|^2+\|\nabla V\|^2+\|\Delta V\|^2
&\le&
|(\tilde{u}\nabla V,\nabla U)|+
\alpha \, |(U\nabla \tilde{v},\nabla U)|
+ \alpha \,  |(U,\Delta V)|\nonumber\\
&&
+ \alpha \,  |(\tilde{f}V,\Delta V)|
+ \alpha \,  |(g_u,U)|
+ \alpha \,  |(g_v,\Delta V)|\label{L-2}.
\end{eqnarray}
Applying the H\"older and Young inequalities to the terms on the right side of (\ref{L-2}) and taking into account
(\ref{interpol}), we have
\begin{eqnarray}
|(\tilde{u}\nabla V,\nabla U)|&\le&\|\tilde{u}\|_{L^4}\|\nabla V\|_{L^4}\|\nabla U\|\le C_\delta\|\tilde{u}\|^2_{L^4}\|\nabla V\|\|\nabla V\|_{H^1}+\delta\|\nabla U\|^2\nonumber\\
&\le&
\delta(\|\nabla V\|^2_{H^1}+\|\nabla U\|^2)
+ C_\delta\|\tilde{u}\|^4_{L^4}\|\nabla V\|^2\label{L-4},\\
\noalign{\vspace{-2ex}}
 \nonumber \\
\alpha \,
|(U\nabla \tilde{v},\nabla U)|
&\le&
\alpha \, 
\|U\|_{L^4}\|\nabla \tilde{v}\|_{L^4}\|\nabla U\|
\le 
C \, \alpha \, \|U\|^{1/2}\|\nabla \tilde{v}\|_{L^4}\|U\|^{3/2}_{H^1}\nonumber\\
&\le&\delta\|U\|^2_{H^1}+C_\delta 
\,
\alpha^2  \,
\|\nabla \tilde{v}\|^4_{L^4}\|U\|^2,\label{L-3}\\
\noalign{\vspace{-2ex}} \nonumber\\
\alpha \, |(g_u,U)|
&\le&
\alpha \, \left(\|g_u\|^2+\|U\|^2 \right),
\label{L-5}\\
\noalign{\vspace{-2ex}} \nonumber\\
\alpha \, |(U,\Delta V)|
&\le&
\delta\|\Delta V\|^2+ 
C_\delta \, \alpha^2 \, \|U\|^2,\label{L-6}\\
\noalign{\vspace{-2ex}} \nonumber\\
\alpha \, |(\tilde{f}V,\Delta V)|
&\le&
\alpha \, 
\|\tilde{f}\|_{L^4}\|V\|_{L^4}\|\Delta V\|
\le
\delta\|\Delta V\|^2
 +
 C_\delta \, \alpha^2 \, \|\tilde{f}\|^2_{L^4}\|V\|^2_{H^1}
 ,\label{L-7}\\
\noalign{\vspace{-2ex}} \nonumber\\
\alpha \,  |(g_v,\Delta V)|
&\le&
 \delta\|\Delta V\|^2
  + C_\delta \alpha^2 \, \|g_v\|^2
 .\label{L-8}
\end{eqnarray}
On the other hand, testing (\ref{L-1aa})$_2$ by  $V$ one  has
\begin{eqnarray}\label{L-1a}
\frac12\frac{d}{dt}\|V\|^2+\| V\|_{H^1}^2
&\le&
\alpha \, |(U,V)|
+ \alpha \, |(\tilde{f}V,V)| 
+ \alpha \,  \vert (g_v,V) \vert
\nonumber\\
&\le&\delta \, \left(  \| V\|^2 + \Vert  V \Vert^2_{H^1} \right)
+
C_\delta \, \alpha^2 \, \|U\|^2
\nonumber\\
& + & C_\delta \alpha^2 \|\tilde{f}\|^2_{L^4}\|V\|^2 
+ C_\delta \, \alpha^2 \, \Vert g_v \Vert^2.
\end{eqnarray}
Summing the inequalities (\ref{L-2}) and  (\ref{L-1a}), and 
then adding $\|U\|^2$ to both sides of the obtained inequality 
and considering (\ref{L-3})-(\ref{L-8}), taking $\delta$ small enough and any $\alpha \in [0,1]$, we have
\begin{eqnarray}
\frac{d}{dt}(\|U\|^2+\|V\|^2_{H^1})+C(\|U\|^2_{H^1}+\|V\|^2_{H^2})
&\le&C(1+\|\nabla \tilde{v}\|^4_{L^4})\|U\|^2
+C(\|g_u\|^2+\|g_v\|^2)\nonumber\\
&&+C(\|\tilde{u}\|^4_{L^4}+\|\tilde{f}\|^2_{L^4} ) \|V\|^2_{H^1}.\label{L-10}
\end{eqnarray}




By applying the Gronwall Lemma in (\ref{L-10}) we conclude that there exists a  positive constant $C_0$ that depends on
$T,\|U_0\|,\|V_0\|_{H^1},\|\tilde{u}\|_{L^4(Q)},\|\nabla\tilde{v}\|_{L^4(Q)},\|\tilde{f}\|_{L^2(L^4)},\|g_u\|_{L^2(Q)}$ 
and $\|g_v\|_{L^2(Q)},$ such that 
\begin{equation}\label{L-11}
\|U,V\|_{L^\infty(L^2\times H^1)\cap L^2(H^1\times H^2)}\le C_0.
\end{equation}

Now, 
following  Step 4  in the proof of Lemma \ref{cot-1}, 
we obtain that $V$ is bounded in $\mathcal{Y}_v$, because  the following estimate holds
\begin{equation}\label{L-110}
\|V\|_{L^4(W^{2,4}(\Omega))}+\|\partial_tV\|_{L^4(Q)}+\|V\|_{C(W^{3/2,4}_{\bf n})}
\le C_1(C_0, \|V_0\|_{W^{3/2,4}_{\bf n}},\|g_v\|_{L^4(Q)},\|\tilde{f}\|_{L^4(Q)}).
\end{equation}

Now, we follow Step 5  in the proof of Lemma \ref{cot-1} with small modifications. By testing (\ref{L-1aa})$_1$ by $-\Delta U$, using the H\"older and Young inequalities, and considering the interpolation inequality (\ref{interpol}),  we  obtain
\begin{eqnarray}
\frac12\frac{d}{dt}\|\nabla U\|^2+\|\Delta U\|^2
&\le& C \, \alpha^2  \, (\|g_u\|^2+\|U\|^2_{L^4}\|\Delta\tilde{v}\|^2_{L^4}+\|\nabla U\|^2\|\nabla\tilde{v}\|^4_{L^4})\nonumber\\
&&+C(\|\tilde{u}\|^2_{L^4}\|\Delta V\|^2_{L^4}+\|\nabla\tilde{u}\|^2_{L^4}\|\nabla V\|^2_{L^4})+\delta(\|\Delta U\|^2+\|U\|^2_{H^2}).\label{L-16}
\end{eqnarray}
On the other hand, from (\ref{L-1})$_1$ we deduce $\displaystyle\frac{d}{dt}\left(\int_\Omega U\right)=\alpha \, \int_\Omega g_u$, which implies
\begin{equation}
\frac12\frac{d}{dt}\left(\int_\Omega U\right)^2
=
\alpha \, \left(\int_\Omega g_u\right)\left(\int_\Omega U\right)
\le 
C_\delta \, \alpha^2 \,
\left(\int_\Omega g_u\right)^2+\delta\left(\int_\Omega U\right)^2,\label{L-17-0}
\end{equation}
and
\begin{equation}
\left|\int_\Omega U(t)\right|^2=\left|\int_\Omega U_0+\alpha \, \int_0^t\int_\Omega g_u\right|^2\le C.\label{L-17}
\end{equation}
Summing inequalities (\ref{L-16})-(\ref{L-17}), taking $\delta$ small enough and $\alpha \in [0,1]$,  accounting (\ref{equi}), (\ref{norma-1}) and (\ref{L-110}), we can obtain the estimate
$\|U\|_{L^\infty(H^1)\cap L^2(H^2)}\le C$. Finally, the estimate $\|\partial_t U\|_{L^2(L^2)}\le C$ is deduced as in the proof of Lemma \ref{cot-1}.

 Therefore, we can deduce the existence of solution for (\ref{L-1}) from Leray-Schauder fixed-point theorem.

The uniqueness of $(U,V)$ follows directly from the regularity of $(U,V)$ and the linearity of system (\ref{L-1}).

Thus, we conclude the proof.
\end{proof}
Now we show the existence of Lagrange multipliers.

\begin{theorem}\label{lagrange}
Let $\tilde{s}=(\tilde{u},\tilde{v},\tilde{f})\in\mathcal{S}_{ad}$ be a local optimal solution for the control problem
(\ref{problem-1}). Then, there exist Lagrange multipliers $(\lambda,\eta, \xi,\varphi)\in L^2(Q)\times L^{4/3}(Q)\times (H^1(\Omega))'\times (W^{3/2,4}_{\bf n}(\Omega))'$ such that for all
$(U,V,F)\in\mathcal{W}_u\times\mathcal{W}_v\times \mathcal{C}(\tilde{f})$ one has 
\begin{eqnarray}\label{reg-4}
&&\alpha_u\int_0^T\int_{\Omega_d}(\tilde{u}-u_d)U\,dxdt+\alpha_v\int_0^T\int_{\Omega_d}(\tilde{v}-v_d)V\,dxdt+N\int_0^T\int_{\Omega_c}(\tilde{f})^3F\,dxdt\nonumber\\
&&-\int_0^T\int_\Omega\bigg(\partial_tU-\Delta U-\nabla\cdot(U\nabla\tilde{v})-\nabla\cdot(\tilde{u}\nabla V)\bigg)\lambda\,dxdt\nonumber\\
&&-\int_0^T\int_\Omega\bigg(\partial_tV-\Delta V+V-U-\tilde{f}V\bigg)\eta\,dxdt\nonumber\\
&&-\int_\Omega U(0)\xi\,dx-\int_\Omega V(0)\varphi\,dx+\int_0^T\int_{\Omega_c}F\tilde{v}\eta\,dxdt\ge0.
\end{eqnarray}
\end{theorem}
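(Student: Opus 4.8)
The plan is to invoke the Lagrange multiplier theorem of Zowe and Kurcyusz (Theorem \ref{abs6}) applied to the reformulated problem (\ref{problem-1}). All of its hypotheses are already available: by Lemma \ref{derivative_J} the functional $J$ is Fr\'echet differentiable, by Lemma \ref{derivative_R} the constraint operator $G$ is continuously Fr\'echet differentiable, and by Lemma \ref{reg-2} every $\tilde s\in\mathcal S_{ad}$ --- in particular the given local optimal solution --- is a regular point. Hence Theorem \ref{abs6} provides a Lagrange multiplier $\lambda\in Y'$ in the sense of Definition \ref{abs2}, and it only remains to write this multiplier and the abstract optimality condition (\ref{abs3}) in concrete form.

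First I would identify the dual space. Since $Y=L^2(Q)\times L^4(Q)\times H^1(\Omega)\times W^{3/2,4}_{\bf n}(\Omega)$, one has $Y'=L^2(Q)\times L^{4/3}(Q)\times (H^1(\Omega))'\times (W^{3/2,4}_{\bf n}(\Omega))'$, and we write the multiplier componentwise as $\lambda=(\lambda,\eta,\xi,\varphi)$ in these four factors; this is exactly the regularity claimed in the statement. Because $\mathcal N=\{{\bf 0}\}$, the polar cone $\mathcal N^+$ is all of $Y'$, so the first line of (\ref{abs3}) is vacuous, and the complementarity condition $\langle\lambda,G(\tilde s)\rangle_{Y'}=0$ holds trivially since $\tilde s\in\mathcal S_{ad}$ forces $G(\tilde s)={\bf 0}$. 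Thus only the third line of (\ref{abs3}), namely $J'(\tilde s)-\lambda\circ G'(\tilde s)\in\mathcal C(\tilde s)^+$, carries information.

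Next I would unfold this inclusion. By definition of the polar cone it means $\big(J'(\tilde s)-\lambda\circ G'(\tilde s)\big)[r]\ge 0$ for every $r\in\mathcal C(\tilde s)$, and since $\mathcal M=\mathcal W_u\times\mathcal W_v\times\mathcal F$ with $\mathcal W_u,\mathcal W_v$ linear subspaces containing $\tilde u,\tilde v$, the conical hull factorizes as $\mathcal C(\tilde s)=\mathcal W_u\times\mathcal W_v\times\mathcal C(\tilde f)$ (the same identification already used in Remark \ref{regular_point}); hence $r=(U,V,F)$ runs over $\mathcal W_u\times\mathcal W_v\times\mathcal C(\tilde f)$. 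Using formula (\ref{deri_J}) for $J'(\tilde s)[r]$, formula (\ref{deri_R}) for the four components of $G'(\tilde s)[r]$, and expanding the duality product $\langle\lambda,G'(\tilde s)[r]\rangle_{Y'}$ as $\int_0^T\!\int_\Omega G_1'(\tilde s)[r]\,\lambda\,dxdt+\int_0^T\!\int_\Omega G_2'(\tilde s)[r]\,\eta\,dxdt+\langle G_3'(\tilde s)[r],\xi\rangle+\langle G_4'(\tilde s)[r],\varphi\rangle$, the inequality $\big(J'(\tilde s)-\lambda\circ G'(\tilde s)\big)[r]\ge 0$ becomes precisely (\ref{reg-4}); in particular the term $-\int_0^T\!\int_\Omega G_2'(\tilde s)[r]\,\eta\,dxdt$ splits into $-\int_0^T\!\int_\Omega(\partial_tV-\Delta V+V-U-\tilde fV)\eta\,dxdt$ plus $+\int_0^T\!\int_{\Omega_c}F\tilde v\,\eta\,dxdt$, which is the only place where the control direction $F$ enters through the state coupling.

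The argument is therefore a matter of assembling the three lemmas and rewriting the abstract pairing in integral form; no additional a priori estimate is needed at this point. The only genuinely substantial ingredient is the regular-point property, i.e.\ the surjectivity of $G'(\tilde s):X\to Y$, equivalently the unique solvability of the linearized system (\ref{L-1}); this is exactly Lemma \ref{reg-2}, already proved via a Leray--Schauder argument analogous to that of Section \ref{existence}. Granting that, the conclusion follows at once.
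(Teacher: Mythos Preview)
Your argument is correct and follows essentially the same route as the paper: invoke Lemma \ref{reg-2} to verify that $\tilde s$ is a regular point, apply Theorem \ref{abs6} to obtain a multiplier in $Y'=L^2(Q)\times L^{4/3}(Q)\times (H^1(\Omega))'\times (W^{3/2,4}_{\bf n}(\Omega))'$, and then expand the abstract condition $J'(\tilde s)-\lambda\circ G'(\tilde s)\in\mathcal C(\tilde s)^+$ using (\ref{deri_J})--(\ref{deri_R}) to arrive at (\ref{reg-4}). Your proposal is in fact more explicit than the paper's own proof, which compresses these steps into two sentences; in particular your observation that $\mathcal C(\tilde s)=\mathcal W_u\times\mathcal W_v\times\mathcal C(\tilde f)$ and the splitting of the $G_2'$-pairing into the $(U,V)$-part and the $F\tilde v$-term are left implicit there.
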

\begin{proof}

From Lemma \ref{reg-2}, $\tilde{s}\in\mathcal{S}_{ad}$ is a regular point, then from Theorem \ref{abs6} there exist Lagrange multipliers
$(\lambda,\eta, \xi,\varphi)\in L^2(Q)\times L^{4/3}(Q)\times (H^1(\Omega))'\times (W^{3/2,4}_{\bf n}(\Omega))'$ such that
\begin{equation}\label{reg-5}
J'(\tilde{s})[r]-\langle R'_1(\tilde{s})[r],\lambda\rangle-\langle R_2'(\tilde{s})[r],\eta\rangle
-\langle R'_3(\tilde{s})[r],\xi\rangle-\langle R'_4(\tilde{s})[r],\varphi\rangle\ge 0,
\end{equation}
for all  $r=(U,V,F)\in\mathcal{W}_u\times\mathcal{W}_v\times \mathcal{C}(\tilde{f})$.
Thus, the proof follows from (\ref{deri_J})-(\ref{deri_R}).
\end{proof}
From Theorem, \ref{lagrange} we derive an optimality system for which we consider the following spaces 
\begin{equation}\label{reg-51}
\mathcal{W}_{u_0}:=\{u\in\mathcal{W}_u\,:\, u(0)=0\},\ \mathcal{W}_{v_0}:=\{v\in\mathcal{W}_v\,:\, v(0)=0\}.
\end{equation}

\begin{corol}\label{opti_system}
Let $\tilde{s}=(\tilde{u},\tilde{v},\tilde{f})$ be a local optimal solution for the optimal control problem (\ref{problem-1}). Then the Lagrange multiplier
$(\lambda,\eta)\in L^2(Q)\times L^{4/3}(Q)$, provided by Theorem \ref{lagrange}, satisfies the system
\begin{eqnarray}
&&\int_0^T\int_\Omega\bigg(\partial_tU-\Delta U-\nabla\cdot(U\nabla\tilde{v})\bigg)\lambda\,dxdt-\int_0^T\int_\Omega U\eta\,dxdt\nonumber\\
&&=\alpha_u\int_0^T\int_{\Omega_d}(\tilde{u}-u_d)U\,dxdt\quad \forall U\in\mathcal{W}_{u_0},\label{reg-6}\\
&&\int_0^T\int_\Omega\bigg(\partial_tV-\Delta V+V\bigg)\eta\,dxdt-\int_0^T\int_{\Omega_c} \tilde{f}V\eta\, dxdt-\int_0^T\int_\Omega\nabla\cdot(\tilde{u}\nabla V)\lambda\,dxdt\nonumber\\
&&=\alpha_v\int_0^T\int_{\Omega_d}(\tilde{v}-v_d)V\,dxdt\quad \forall V\in\mathcal{W}_{v_0},\label{reg-7}
\end{eqnarray}
which corresponds to the concept of very weak solution of the linear system 
\begin{equation}\label{reg-8}
\left\{
\begin{array}{rcl}
\partial_t\lambda+\Delta\lambda-\nabla\lambda\cdot\nabla\tilde{v}+\eta&=&-\alpha_u(\tilde{u}-u_d)\chi_{_{\Omega_d}}\ \mbox{ in }Q,\\
\partial_t\eta+\Delta \eta +\nabla\cdot(\tilde{u}\nabla\lambda)-\eta+\tilde{f}\eta\chi_{_{\Omega_c}}&=&-\alpha_v(\tilde{v}-v_d)\chi_{_{\Omega_d}}\ \mbox{ in }Q,\\
\lambda(T)&=&0,\ \eta(T)=0\ \mbox{ in }\Omega,\\
\dfrac{\partial\lambda}{\partial{\bf n}}&=&0,\ \dfrac{\partial\eta}{\partial{\bf n}}=0\ \mbox{ on }(0,T)\times\partial\Omega,
\end{array}
\right.
\end{equation}
and the optimality condition  
\begin{equation}\label{reg-9}
\int_0^T\int_{\Omega_c}(N(\tilde{f})^3+\tilde{v}\eta)(f-\tilde{f})\,dxdt\ge 0,\ \forall f\in\mathcal{F}.
\end{equation}
\end{corol}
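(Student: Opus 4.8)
The plan is to derive the three relations (\ref{reg-6}), (\ref{reg-7}) and (\ref{reg-9}) by specializing the variational inequality (\ref{reg-4}) of Theorem \ref{lagrange} to suitable directions $r=(U,V,F)$, and then to recognize (\ref{reg-6})--(\ref{reg-7}) as the very weak formulation of (\ref{reg-8}) by a formal integration by parts.

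First I would choose in (\ref{reg-4}) the direction $(U,0,0)$ with $U\in\mathcal{W}_{u_0}$; this is admissible because $0\in\mathcal{C}(\tilde f)$ and $\mathcal{W}_{u_0}\subset\mathcal{W}_u$. Since $\mathcal{W}_{u_0}$ is a \emph{linear} subspace, both $U$ and $-U$ may be used, so the inequality collapses to an equality; moreover $U(0)=0$ annihilates $\int_\Omega U(0)\xi$, and $V=F=0$ kills the remaining contributions (in particular $\int_{\Omega_c}F\tilde v\eta$). Rearranging gives exactly (\ref{reg-6}). Taking next $(0,V,0)$ with $V\in\mathcal{W}_{v_0}$ and using the same linearity together with $V(0)=0$ yields (\ref{reg-7}). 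Finally, choosing $(0,0,F)$ with $F=f-\tilde f$ for an arbitrary $f\in\mathcal{F}$ (so $F\in\mathcal{C}(\tilde f)$) leaves only $N\int_0^T\int_{\Omega_c}(\tilde f)^3F\,dxdt+\int_0^T\int_{\Omega_c}F\tilde v\eta\,dxdt\ge0$, i.e.\ (\ref{reg-9}); here the inequality cannot be turned into an equality since $\mathcal{C}(\tilde f)$ is only a convex cone.

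It remains to interpret (\ref{reg-6})--(\ref{reg-7}). In (\ref{reg-6}) I would integrate by parts, \emph{formally}, using $\partial_{\bf n}U=\partial_{\bf n}\lambda=\partial_{\bf n}\tilde v=0$, $U(0)=0$ and (as part of the very weak concept) $\lambda(T)=0$: one gets $\int_Q\partial_tU\,\lambda=-\int_Q U\,\partial_t\lambda$, $-\int_Q\Delta U\,\lambda=-\int_Q U\,\Delta\lambda$ and $-\int_Q\nabla\cdot(U\nabla\tilde v)\,\lambda=\int_Q U\,\nabla\lambda\cdot\nabla\tilde v$, so that (\ref{reg-6}) reads $-\int_Q U\big(\partial_t\lambda+\Delta\lambda-\nabla\lambda\cdot\nabla\tilde v+\eta\big)=\alpha_u\int_Q(\tilde u-u_d)\chi_{\Omega_d}U$ for all $U\in\mathcal{W}_{u_0}$, which is the first equation of (\ref{reg-8}) with its Neumann and terminal conditions. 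For (\ref{reg-7}) the only new point is $-\int_Q\nabla\cdot(\tilde u\nabla V)\,\lambda$, which after integrating by parts twice in space becomes $-\int_Q V\,\nabla\cdot(\tilde u\nabla\lambda)$ (all boundary terms vanishing because $\partial_{\bf n}V=0$ and $\partial_{\bf n}\lambda=0$); combined with the time-integration by parts in $\partial_tV$ (using $V(0)=0$, $\eta(T)=0$) and the obvious manipulations of the $\Delta V$, $V$ and $\tilde fV$ terms, one recovers the second equation of (\ref{reg-8}). Since $\lambda\in L^2(Q)$ and $\eta\in L^{4/3}(Q)$ only, these manipulations are not rigorous pointwise identities, which is precisely why (\ref{reg-6})--(\ref{reg-7}) are retained as the \emph{definition} of very weak solution of (\ref{reg-8}).

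The algebra above is routine; the point requiring care is the low regularity of the multipliers. One must check that every product in (\ref{reg-4}) and in the integrations by parts is integrable for $(U,V)\in\mathcal{W}_u\times\mathcal{W}_v$ and $(\lambda,\eta)\in L^2(Q)\times L^{4/3}(Q)$: e.g.\ $\nabla\cdot(U\nabla\tilde v)$ and $\nabla\cdot(\tilde u\nabla V)$ lie in $L^2(Q)$ (as already used in Section \ref{existence}) and pair with $\lambda\in L^2(Q)$, $\partial_tV\in L^4(Q)$ pairs with $\eta\in L^{4/3}(Q)$, and $\tilde v\in L^\infty(Q)$ makes $\tilde v\eta\in L^{4/3}(Q)$ pair with $F\in L^4(Q_c)$. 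This bookkeeping is exactly what fixes the dual exponents appearing in Theorem \ref{lagrange}, and once it is in place the corollary follows immediately.
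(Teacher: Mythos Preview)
Your proposal is correct and follows essentially the same approach as the paper: specialize (\ref{reg-4}) successively to $(U,0,0)$ with $U\in\mathcal{W}_{u_0}$, to $(0,V,0)$ with $V\in\mathcal{W}_{v_0}$, and to $(0,0,f-\tilde f)$, using the linearity of $\mathcal{W}_{u_0}$ and $\mathcal{W}_{v_0}$ to turn the first two inequalities into equalities. You add more detail than the paper does on the formal integration by parts that identifies (\ref{reg-6})--(\ref{reg-7}) with the very weak formulation of (\ref{reg-8}), and on the integrability bookkeeping, but the core argument is identical.
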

\begin{proof}
From (\ref{reg-4}), taking $(V,F)=(0,0)$, and taking into account that $\mathcal{W}_{u_0}$ is a vectorial space, we have (\ref{reg-6}). Similarly, taking $(U,F)=(0,0)$ in (\ref{reg-4}), and considering
that $\mathcal{W}_{v_0}$ is a vectorial space we obtain (\ref{reg-7}).
Finally, taking $(U,V)=(0,0)$ in (\ref{reg-4}) we have

$$
N\int_0^T\int_{\Omega_c}(\tilde{f})^3F\,dxdt+\int_0^T\int_{\Omega_c}\tilde{v}\eta F\,dxdt\ge0,\ \forall F\in \mathcal{C}(\tilde{f}).
$$
Therefore, choosing $F=f-\tilde{f}\in \mathcal{C}(\tilde{f})$ for all $f\in\mathcal{F}$ in the last inequality, we obtain (\ref{reg-9}).
\end{proof}

In the following result we show that the Lagrange multiplier $(\lambda,\eta)$, provided by Theorem \ref{lagrange}, has some extra regularity. 

\begin{theorem}\label{adjoint}
Under of conditions of Theorem \ref{lagrange}, system (\ref{reg-8}) has a unique  strong solution  
$(\lambda,\eta)$ such that
\begin{eqnarray}
&&\lambda \in L^\infty(0,T;H^1(\Omega))\cap L^2(0,T;H^2(\Omega)),\ \partial_t\lambda\in L^2(Q),\label{adj-1}\\
&&\eta\in L^\infty(0,T;W^{2-2/p,p}(\Omega))\cap L^p(0,T;W^{2,p}(\Omega)),\ \partial_t\eta\in L^p(Q), \quad \hbox{for any $p<2$.}
\label{adj-2}
\end{eqnarray}

\end{theorem}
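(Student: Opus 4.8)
\emph{Proof plan.} The plan is to reduce the backward system (\ref{reg-8}) to a forward one via the time reversal $t\mapsto T-t$, which turns it into a \emph{linear} parabolic system with zero initial data, homogeneous Neumann conditions, and coefficients $\tilde u,\tilde v,\tilde f$ having the regularity of a strong solution provided by Theorem \ref{strong_solution} (so $\tilde u\in\mathcal{Y}_u$, $\tilde v\in\mathcal{Y}_v$, $\tilde f\in L^4(Q)$), the source terms $\alpha_u(\tilde u-u_d)\chi_{\Omega_d}$ and $\alpha_v(\tilde v-v_d)\chi_{\Omega_d}$ being in $L^2(Q)$. Existence of a solution will be obtained by a standard approximation scheme — a Faedo--Galerkin method or the decoupled Leray--Schauder argument already used in the proof of Lemma \ref{reg-2}, solving first for one unknown and then for the other — so the substantial part is the derivation of a priori estimates independent of the approximation parameter; uniqueness will then follow by linearity, applying the same estimates to the difference of two solutions, which satisfies the system with vanishing data.

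For the a priori bounds I would first carry out one coupled energy estimate. Multiplying the (reversed) $\lambda$-equation by $-\Delta\lambda$, the $\eta$-equation by $\eta$, integrating over $\Omega$ and summing, one obtains an inequality of the form
\[
\frac{d}{dt}\Big(\|\nabla\lambda\|^2+\|\eta\|^2\Big)+c\,\big(\|\Delta\lambda\|^2+\|\eta\|^2_{H^1}\big)\ \le\ (\text{cross terms})+(\text{data}).
\]
The cross terms $\int(\nabla\lambda\cdot\nabla\tilde v)\Delta\lambda$, $\int\eta\Delta\lambda$, $\int\tilde u\,\Delta\lambda\,\eta$, $\int(\nabla\tilde u\cdot\nabla\lambda)\eta$ and $\int\tilde f\,\eta^2\chi_{\Omega_c}$ are handled by the H\"older and Young inequalities together with the $2D$ interpolation inequality (\ref{interpol}): the pieces carrying $\|\Delta\lambda\|$ are absorbed by the $\delta\|\Delta\lambda\|^2$ on the left, and the surviving factors ($\|\nabla\tilde v\|^4_{L^4}$, $\|\tilde u\|^2_{L^\infty}$, $\|\tilde f\|^2_{L^4}$) lie in $L^1(0,T)$, so the Gronwall lemma applies. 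Combining this with the control of $\int_\Omega\lambda$ obtained by integrating the $\lambda$-equation over $\Omega$ and with the equivalent norms (\ref{equi})--(\ref{norma-1}), one gets $\lambda\in L^\infty(0,T;H^1(\Omega))\cap L^2(0,T;H^2(\Omega))$ and $\eta\in L^\infty(0,T;L^2(\Omega))\cap L^2(0,T;H^1(\Omega))$; isolating $\partial_t\lambda$ in its equation and using $\nabla\lambda\cdot\nabla\tilde v\in L^{8/3}(Q)\hookrightarrow L^2(Q)$ yields $\partial_t\lambda\in L^2(Q)$, which is (\ref{adj-1}).

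For (\ref{adj-2}) I would bootstrap: with $\lambda$ now known in $L^2(0,T;H^2(\Omega))$, the right-hand side of the $\eta$-equation, $\nabla\cdot(\tilde u\nabla\lambda)+\tilde f\,\eta\,\chi_{\Omega_c}+\alpha_v(\tilde v-v_d)\chi_{\Omega_d}$, belongs to $L^p(Q)$ for every $p<2$. Indeed $\nabla\tilde u\cdot\nabla\lambda\in L^2(Q)$ and $\tilde f\,\eta\in L^2(Q)$, while $\tilde u\,\Delta\lambda\in L^p(Q)$ for all $p<2$ since $\tilde u\in L^\infty(0,T;L^q(\Omega))$ for every $q<\infty$ (the $2D$ embedding $H^1\hookrightarrow L^q$) and $\Delta\lambda\in L^2(Q)$. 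As $\eta(0)=0\in\widehat{W}^{2-2/p,p}(\Omega)$ and $1<p<2<3$, Lemma \ref{feireisl} applied with this $p$ gives $\eta\in C([0,T];W^{2-2/p,p}(\Omega))\cap L^p(0,T;W^{2,p}(\Omega))$ with $\partial_t\eta\in L^p(Q)$, which is exactly (\ref{adj-2}).

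I expect the main obstacle to be the coupled, non-triangular structure: $\eta$ enters the $\lambda$-equation only at order zero, whereas $\lambda$ enters the $\eta$-equation at second order through $\nabla\cdot(\tilde u\nabla\lambda)$, so the unknowns cannot be decoupled at the level of regularity and one must close a single balanced estimate before bootstrapping. A related technical nuisance is that the coefficient $\tilde u$ is, in general, only in $L^2(0,T;L^\infty(\Omega))$ rather than in $L^\infty(Q)$, which forces Gronwall arguments with $L^1$-in-time coefficients; and it is precisely the product $\tilde u\,\Delta\lambda$, with $\Delta\lambda$ only in $L^2(Q)$, that limits the regularity of $\eta$ to exponents $p<2$.
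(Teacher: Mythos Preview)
Your proposal is correct and follows essentially the same route as the paper's proof: time reversal to a forward linear system, a coupled energy estimate obtained by testing the $\lambda$-equation by $-\Delta\lambda$ and the $\eta$-equation by $\eta$ (yielding $\lambda\in L^\infty(H^1)\cap L^2(H^2)$ and $\eta\in L^\infty(L^2)\cap L^2(H^1)$ via Gronwall), and then a bootstrap on the $\eta$-equation through Lemma~\ref{feireisl} using that $\nabla\cdot(\tilde u\nabla\lambda)\in L^p(Q)$ for every $p<2$. The only cosmetic difference is that the paper handles the coupling term by integrating by parts, $\int\nabla\cdot(\tilde u\nabla\lambda)\,\eta=-\int\tilde u\,\nabla\lambda\cdot\nabla\eta$, which leads to the factor $\|\tilde u\|_{L^4}^4$ in the Gronwall coefficient rather than your $\|\tilde u\|_{L^\infty}^2$; both are in $L^1(0,T)$ and the argument closes either way.
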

\begin{proof}
Let $s=T-t,$ with $t\in(0,T)$ and $\tilde{\lambda}(s)=\lambda(t)$, $\tilde{\eta}(s)=\eta(t)$. Then system (\ref{reg-8}) is equivalent to
\begin{equation}\label{adj}
\left\{
\begin{array}{rcl}
\partial_s\tilde{\lambda}-\Delta\tilde{\lambda}+\nabla\tilde{\lambda}\cdot\nabla\tilde{v}-\tilde{\eta}&=&\alpha_u(\tilde{u}-u_d)\chi_{_{\Omega_d}}\ \mbox{ in }Q,\\
\partial_s\tilde{\eta}-\Delta\tilde{\eta}-\nabla\cdot(\tilde{u}\nabla\tilde{\lambda})+\tilde{\eta}-\tilde{f}\tilde{\eta}\chi_{_{\Omega_c}}&=&\alpha_v(\tilde{v}-v_d)\chi_{_{\Omega_d}}\ \mbox{ in }Q,\\
\tilde{\lambda}(0)&=&0,\ \tilde{\eta}(0)=0\ \mbox{ in }\Omega,\\
\dfrac{\partial\tilde{\lambda}}{\partial{\bf n}}&=&0,\ \dfrac{\partial\tilde{\eta}}{\partial{\bf n}}=0\ \mbox{ on }(0,T)\times\Omega.
\end{array}
\right.
\end{equation}
Following an analogous reasoning  as in the proof of Lemma \ref{reg-2}, we can obtain the  energy inequality
\begin{eqnarray}\label{ad-661}
\frac{d}{ds}(\|\tilde\lambda\|^2_{H^1}+\|\tilde\eta\|^2)+C(\|\tilde\lambda\|^2_{H^2}+\|\tilde\eta\|^2_{H^1})
&\le& C(\|\tilde\eta\|^2+\|\tilde\lambda\|^2)+C(\|\tilde{u}-u_d\|^2+\|\tilde{v}-v_d\|^2)\nonumber\\
&&+C\|\tilde\lambda\|^2_{H^1}\|\nabla \tilde{v}\|^4_{L^4}+C\|\tilde{u}\|^4_{L^4}\|\nabla\tilde\lambda\|^2+C\|\tilde{f}\|^4_{L^4}\|\tilde\eta\|^2\nonumber\\
&\le& C(1+\|\tilde{f}\|_{L^4}^4)\|\tilde\eta\|^2+C(\|\tilde{u}-u_d\|^2+\|\tilde{v}-v_d\|^2)\nonumber\\
&&+C(1+\|\tilde{u}\|^4_{L^4}+\|\nabla\tilde{v}\|^4_{L^4})\|\tilde\lambda\|^2_{H^1}.
\end{eqnarray}
Thus,  we deduce that
$$
\left\{
\begin{array}{l}
\tilde\lambda\in L^\infty(0,T;H^1(\Omega))\cap L^2(0,T;H^2(\Omega)),\\
\tilde\eta \in L^\infty(0,T;L^2(\Omega))\cap L^2(0,T;H^1(\Omega)),
\end{array}
\right.
$$
hence in particular (\ref{adj-1}) holds. 

Now, since $\tilde{f}\in L^4(Q_c)$ and $\tilde\eta\in L^\infty(0,T;L^2(\Omega))\cap L^2(0,T;H^1(\Omega))\hookrightarrow L^4(Q)$ we have
\begin{equation}\label{adj-3}
\tilde{f}\tilde\eta \in L^2(Q).
\end{equation}
Also, taking into account that 
$\tilde{u}\in\mathcal{W}_u$, where $\mathcal{W}_u$ is defined in (\ref{space_state_u}), and $\tilde{\lambda}\in L^\infty(0,T;H^1(\Omega))\cap L^2(0,T;H^2(\Omega))$, we
obtain
\begin{equation}\label{adj-4}
\nabla\cdot(\tilde{u}\nabla\tilde\lambda)=\tilde{u}\Delta\tilde\lambda+\nabla\tilde{u}\cdot\nabla\tilde\lambda\in L^p(Q)\quad \forall p<2.
\end{equation}
Therefore, from (\ref{adj})$_2$, (\ref{adj-3}), (\ref{adj-4}) and Lemma \ref{feireisl} we conclude (\ref{adj-2}).
\end{proof}
\begin{corol}(Optimality System)\label{optimality_system}
Let $\tilde{s}=(\tilde{u},\tilde{v},\tilde{f})\in\mathcal{S}_{ad}$ be a local optimal solution for the control problem
(\ref{problem-1}). Then, the Lagrange multiplier $(\lambda,\eta)$ satisfies the regularity \eqref{adj-1} and \eqref{adj-2} and the following optimality system  
\begin{equation}\label{optimality}
\left\{
\begin{array}{rcl}
\partial_t\lambda+\Delta\lambda-\nabla\lambda\cdot\nabla\tilde{v}+\eta&=&-\alpha_u(\tilde{u}-u_d)\chi_{_{\Omega_d}}\ \mbox{ a.e. }(t,x)\in Q,\\
\partial_t\eta+\Delta \eta +\nabla\cdot(\tilde{u}\nabla\lambda)-\eta+\tilde{f}\eta\chi_{_{\Omega_c}}&=&-\alpha_v(\tilde{v}-v_d)\chi_{_{\Omega_d}}\ \mbox{ a.e. }(t,x)\in Q,\\
\lambda(T)&=&0,\ \eta(T)=0\ \mbox{ in }\Omega,\\
\dfrac{\partial\lambda}{\partial{\bf n}}&=&0,\ \dfrac{\partial\eta}{\partial{\bf n}}=0\ \mbox{ on }(0,T)\times\partial\Omega,\\
\displaystyle\int_0^T\int_{\Omega_c}(N(\tilde{f})^3+\tilde{v}\eta)(f-\tilde{f})\,dxdt&\ge& 0,\ \forall f\in\mathcal{F}.
\end{array}
\right.
\end{equation}
\end{corol}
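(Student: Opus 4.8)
The plan is to deduce Corollary~\ref{optimality_system} by combining three results already established. Theorem~\ref{lagrange} yields Lagrange multipliers $(\lambda,\eta,\xi,\varphi)$ satisfying the variational inequality \eqref{reg-4}; Corollary~\ref{opti_system}, by choosing the test functions in \eqref{reg-4} in the vector subspaces $\mathcal{W}_{u_0}$, $\mathcal{W}_{v_0}$ and in the cone $\mathcal{C}(\tilde f)$, shows that $(\lambda,\eta)$ solves the very weak adjoint problem \eqref{reg-6}--\eqref{reg-7} and satisfies the optimality condition \eqref{reg-9}; and Theorem~\ref{adjoint} states that this very weak solution is in fact a strong solution of \eqref{reg-8} with the regularity \eqref{adj-1}--\eqref{adj-2}. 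Since the last inequality in \eqref{optimality} is literally \eqref{reg-9} and the asserted regularity is exactly \eqref{adj-1}--\eqref{adj-2}, the only point still to be spelled out is that \eqref{reg-6}--\eqref{reg-7}, read together with the regularity \eqref{adj-1}--\eqref{adj-2}, forces the two adjoint PDEs of \eqref{optimality} to hold a.e.\ in $Q$, together with the terminal conditions $\lambda(T)=\eta(T)=0$ and the homogeneous Neumann conditions.

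First I would note that, thanks to \eqref{adj-1}--\eqref{adj-2} and the embeddings already used for \eqref{adj-3}--\eqref{adj-4}, each term in the first two lines of \eqref{optimality} is a well-defined function on $Q$: for instance $\partial_t\lambda,\Delta\lambda\in L^2(Q)$ and $\nabla\lambda\cdot\nabla\tilde v\in L^2(Q)$ (since $\nabla\lambda\in L^4(Q)$ and $\nabla\tilde v\in L^8(Q)$), while $\partial_t\eta,\Delta\eta,\nabla\cdot(\tilde u\nabla\lambda)\in L^p(Q)$ for $p<2$ and $\tilde f\eta\chi_{\Omega_c}\in L^2(Q)$. Next I would take $U\in C_c^\infty((0,T)\times\Omega)$ in \eqref{reg-6}, integrate by parts to move $\partial_t$ and $\Delta$ onto $\lambda$, and use $\int_Q\nabla\cdot(U\nabla\tilde v)\,\lambda=-\int_Q U\,\nabla\lambda\cdot\nabla\tilde v$; since $U$ is arbitrary, this gives the first PDE of \eqref{optimality} a.e.\ in $Q$. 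Similarly, taking $V\in C_c^\infty((0,T)\times\Omega)$ in \eqref{reg-7} and using $\int_Q\nabla\cdot(\tilde u\nabla V)\,\lambda=\int_Q V\,\nabla\cdot(\tilde u\nabla\lambda)$ (two integrations by parts, all boundary terms vanishing because $V$ is compactly supported) yields the second PDE of \eqref{optimality} a.e.\ in $Q$.

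Then I would recover the boundary and terminal conditions by allowing $U\in\mathcal{W}_{u_0}$, resp.\ $V\in\mathcal{W}_{v_0}$, to be general. Integrating \eqref{reg-6} by parts in $t$ and $x$ and substituting the a.e.\ equation just obtained, all interior terms cancel, the $t=0$ contribution vanishes because $U(0)=0$, the boundary term carrying $\partial_{\bf n}\tilde v$ vanishes because $\tilde v\in W^{3/2,4}_{\bf n}(\Omega)$, and the one carrying $\partial_{\bf n}U$ vanishes because $U\in\mathcal{W}_{u_0}$; one is left with $\int_\Omega U(T)\,\lambda(T)\,dx+\int_0^T\!\int_{\partial\Omega}U\,\partial_{\bf n}\lambda\,d\sigma\,dt=0$ for every $U\in\mathcal{W}_{u_0}$. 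Choosing $U$ with $U(T)=0$ but arbitrary Dirichlet trace forces $\partial_{\bf n}\lambda=0$ on $(0,T)\times\partial\Omega$, and then letting $U(T)$ range over $H^1(\Omega)$ (dense in $L^2(\Omega)$) forces $\lambda(T)=0$. Repeating the computation with $V\in\mathcal{W}_{v_0}$ in \eqref{reg-7}, now using the already proven $\partial_{\bf n}\lambda=0$ to kill the term $\int_0^T\!\int_{\partial\Omega}V\,\tilde u\,\partial_{\bf n}\lambda\,d\sigma\,dt$, gives $\partial_{\bf n}\eta=0$ and $\eta(T)=0$. Collecting these facts with \eqref{reg-9} and the regularity \eqref{adj-1}--\eqref{adj-2} from Theorem~\ref{adjoint} yields \eqref{optimality}.

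The main obstacle is purely technical: justifying the integrations by parts in time for the $\eta$-equation, where the only information available is $\eta\in L^p(0,T;W^{2,p}(\Omega))$ with $\partial_t\eta\in L^p(Q)$ for $p<2$, together with $\eta\in L^\infty(0,T;L^2(\Omega))\cap L^2(0,T;H^1(\Omega))$ coming from the energy estimate \eqref{ad-661}. One has to verify that this yields $\eta\in C([0,T];L^2(\Omega))$, so that the terminal value $\eta(T)$ is well defined and the time-boundary term is meaningful; this is a standard Lions--Magenes / Aubin--Lions--Simon interpolation argument, but it is the one place where the low exponent $p<2$ calls for care. For $\lambda$ the corresponding continuity is immediate from \eqref{adj-1}, and everything else is routine bookkeeping using \eqref{deri_J}--\eqref{deri_R} from Lemmas~\ref{derivative_J}--\ref{derivative_R}.
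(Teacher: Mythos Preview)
The paper gives no explicit proof of Corollary~\ref{optimality_system}; it is stated as an immediate consequence of Corollary~\ref{opti_system} and Theorem~\ref{adjoint}. Your proposal is correct and supplies the details the paper leaves implicit.

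One remark on economy. The sentence preceding Theorem~\ref{adjoint} (``we show that the Lagrange multiplier $(\lambda,\eta)$ \ldots\ has some extra regularity'') makes clear that the paper intends Theorem~\ref{adjoint} to assert that \emph{the} Lagrange multiplier is the strong solution of \eqref{reg-8}, not merely that some strong solution exists. Once you accept this, the pointwise equations, the Neumann conditions and the terminal data $\lambda(T)=\eta(T)=0$ are already part of the conclusion ``strong solution of \eqref{reg-8}'', so your careful integration-by-parts recovery of these from \eqref{reg-6}--\eqref{reg-7} is, strictly speaking, redundant. Your argument is nonetheless a valid independent check, and the concern you flag about $\eta\in C([0,T];L^2(\Omega))$ is handled exactly as you say (or directly from the $C([0,T];W^{2-2/p,p})$ regularity in \eqref{adj-2} via Lemma~\ref{feireisl}).

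If you want to be fully rigorous about the step you take for granted, note that the proof of Theorem~\ref{adjoint} actually \emph{constructs} a strong solution rather than bootstrapping the a~priori $L^2(Q)\times L^{4/3}(Q)$ multiplier; the identification of the two requires observing that the strong solution satisfies \eqref{reg-6}--\eqref{reg-7} (immediate by integration by parts) and that very weak solutions in $L^2(Q)\times L^{4/3}(Q)$ are unique. The latter follows by duality from the solvability, for arbitrary right-hand side $(g_u,g_v)\in L^2(Q)\times L^4(Q)$, of the linearized forward problem \eqref{L-1} with zero data, which is exactly Lemma~\ref{reg-2}. The paper does not spell this out either, so you are not missing anything the authors provided.
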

\begin{remark}
If $\mathcal{F}\equiv L^4(Q_c)$, that is, there is no convexity constraint on the control, then,  (\ref{optimality})$_5$ becomes
$$
N(\tilde{f})^3\chi_{_{\Omega_c}}+\tilde{v}\eta\chi_{_{\Omega_c}}=0.
$$
Thus, the control $\tilde{f}$ is given by
\begin{equation}\label{control}
\tilde{f}=\left(-\frac1N\tilde{v}\eta\right)^{1/3}\chi_{_{\Omega_c}}.
\end{equation}
\end{remark}

\begin{remark}
All the results obtained in this work hold when the control $f$ belong to $L^q(Q)$, for $q>2$.  Indeed,
we obtain the existence of pointwise strong solutions $(u,v)$ of (\ref{eq1})-(\ref{eq3}), where
the regularity for $u$ remains fixed, that is, $u\in L^\infty(0,T;H^1(\Omega))\cap L^2(0,T;H^2(\Omega))$ with $\partial_tu\in L^2(Q)$, and
$v\in L^\infty(0,T;W^{2-2/q,q}(\Omega))\cap L^q(0,T;W^{2,q}(\Omega))$ with $\partial_tv\in L^q(Q)$.
We fix $q=4$ only for simplicity in the notation.
\end{remark}

\subsection*{Acknowledgments}

F. Guill\'en-Gonz\'alez and M.A. Rodr\'iguez-Bellido have been supported by MINECO grant MTM2015-69875-P (Ministerio de Econom\'ia y Competitividad, Spain). E. Mallea-Zepeda 
has been supported by Proyecto UTA-Mayor 4740-18 (Universidad de Tarapac\'a, Chile). Also, E. Mallea-Zepeda expresses his gratitude to Instituto de Matem\'aticas 
Universidad de Sevilla and Dpto. de Ecuaciones Diferenciales y An\'alisis Num\'erico of Universidad de Sevilla for their hospitality during his research stay in both centers.



\begin{thebibliography}{99}

\bibitem{alekseev} G.V. Alekseev,  Mixed boundary value problems for steady-state magnetohydrodynamic equations of viscous incompressible fluids.
{\it Comp. Math. Math. Phys.} {\bf 56}, no. 8 (2016) 1426-1439.

\bibitem{amann} H. Amann,  Nonhomogeneous linear and quasilinear elliptic and parabolic boundary value problems. {\it Function spaces, differential operators
and nonlinear analysis. Teubner-Texte Math.} {\bf 133} (1993) 9-126. Teubner, Stuttgart.


\bibitem{borzi_park} A. Borz\`{i}, E.-J. Park and  M. Vallejos Lass, Multigrid optimization methods for the optimal control of convection diffusion problems with bilinear control.
{\it J. Optim. Theory Appl.} {\bf 168} (2016) 510-533.

\bibitem{chaves_guerrero_1} F.W. Chaves-Silva and S. Guerrero,  A uniform controllability for the Keller-Segel system.
{\it Asymptot. Anal.} {\bf 92}, no. 3-4 (2015) 318-338.

\bibitem{chaves_guerrero_2} F.W. Chaves-Silva and S. Guerrero, A controllability result for a chemotaxis-fluid model.
{\it J. Diff. Equations.}  {\bf 262}, no. 9 (2017) 4863-4905.

\bibitem{cieslak} T. Cie\'{s}lak, P. Lauren\c{c}ot and C. Morales-Rodrigo, Global existence and convergence to steady states in a chemorepulsion system.
{\it Parabolic and Navier-Stokes equations. Part 1. Banach Center Publ., 81. Banach Center Publ., 81, Part 1, Polish Acad. Sci. Inst. Math., Warsaw.} (2008) 105-117.

\bibitem{casas_kunisch} E. Casas and K. Kunisch, Stabilization by space controls for a class of semilinear parabolic equations. 
{\it SIAM J. Control Optim.} {\bf 55}, no. 1 (2017) 512-532.

\bibitem{dearaujo} A.L.A. De Araujo and P.M.D. De Magalh\~aes, Existence of solutions and optimal control for a model of tissue invasion by solid tumours.
{\it J. Math. Anal. Appl.} {\bf 421} (2015) 842-877.

 
\bibitem{feireisl} E. Feireisl and A. Novotn\'y, Singular limits in thermodynamics of viscous fluids. Advances in Mathematical Fluid Mechanics. Birkh\"auser Verlag, Basel (2009).

\bibitem{fister_mccarthy} K.R. Fister and C.M. Mccarthy, Optimal control of a chemotaxis system.
{\it Quart. Appl. Math.} {\bf 61}, no. 2, (2003) 193-211.

\bibitem{karl_whachsmuth} V. Karl and D. Wachsmuth, An augmented Lagrange method for elliptic state constrained optimal control problems.
{\it Comp. Optim. Appl.} {\bf 69} (2018) 857-880.

\bibitem{keller-segel} E.F. Keller  and L.A. Segel, Initiation of slime mold aggregation viewed as an instability.
{\it J. Theor. Biol.} {\bf 26} (1970) 399-415.

\bibitem{kiem_rosch} B.T.  Kien, A. R\"osch and D. Wachsmuth, Pontyagin's principle for optimal control problem governed by 3D Navier-Stokes equations.
{\it J. Optim. Theory Appl.} {\bf 173} (2017) 30-55.

\bibitem{kroner_vexler} A.  Kr\"oner and B. Vexler,  A priori error estimates for elliptic optimal control problems with bilinear state equation. 
{\it J. Comp. Appl. Mech.} {\bf 230} (2009) 781-802.


\bibitem{kunisch_trautman} K. Kunisch, P. Trautmann and B. Vexler, Optimal control of the undamped linear wave equation with measure valued controls.
{\it SIAM J. Control Optim.} {\bf 54}, no. 3 (2016) 1212-1244.

\bibitem{lions} J.L. Lions,  Quelques m\'etodes de r\'esolution des probl\`emes aux limites non lin\'eares. Dunod, Paris (1969).


\bibitem{elva_elder} E. Mallea-Zepeda, E. Ortega-Torres and E.J. Villamizar-Roa, A boundary control problem for micropolar fluids.
{\it J. Optim. Theory Appl.} {\bf 169}, no. 2 (2016) 349-369.

\bibitem{necas} L. Necas, Les m\'ethodes directes en th\'eorie des equations elliptiques. Editeurs Academia, Prague (1967).

\bibitem{rodriguez_rueda} M.A. Rodr\'iguez-Bellido, D.A. Rueda-G\'omez and E.J.  Villamizar-Roa, On a distributed control problem for a coupled
chemotaxis-fluid model.
{\it Discrete Cotin. Dyn. Syst. B.} {\bf 23}, no. 2 (2018) 557-517.

\bibitem{diego_elder} D.A. Rueda-G\'omez and  E.J. Villamizar-Roa, On the Rayleigh-B\'enard-Marangoni system and a related optimal control problem. 
{\it Comp. Math. Appl.} {\bf 74}, no. 12 (2017) 2969-2991.

\bibitem{ryu_yagi} S.-U. Ryu and A. Yagi, Optimal control of Keller-Segel equations.
{\it J. Math. Anal. Appl.} {\bf 256}, no. 1 (2001) 45-66.

\bibitem{ryu} S.-U. Ryu, Boundary control of chemotaxis reaction diffusion system.
{\it Honam Math. J.} {\bf 30}, no. 3 (2008) 469-478.

\bibitem{simon} J. Simon, Compact sets in the space $L^p(0,T;B)$. {\it Ann. Mat. Pura Appl.} {\bf 146} (1987) 65-96.

\bibitem{tachim} T. Tachim Medjo, Optimal control of the primitive equations of the ocean with state constraints. {\it Nonlinear Analysis} {\bf 73} (2010) 634-649.

\bibitem{tao} Y. Tao, Global dynamics in a higher-dimensional repulsion chemotaxis model with nonlinear sensivity.
{\it Discrete Cotin. Dyn. Syst. B.} {\bf 18}, no. 10 (2013) 2705-2722. 

\bibitem{triebel} H. Triebel, Interpolation theory, function spaces, differential operators. 
VEB Deutscher Verlag de Wissenschaften, Berlin (1978).

\bibitem{vallejos_borzi} M. Vallejos and A. Borz\`{i}, Multigrid optimization methods for linear and bilinear elliptic optimal control problems. 
{\it Computing}  {\bf 82}, no. 2 (2008) 31-52.

\bibitem{wang-1} G. Wang, Optimal control of 3-dimensional Navier-Stokes equations with state constraints. 
{\it SIAM J. Control Optim.}  {\bf 41} (2002) 583-606.

\bibitem{zheng} J. Zhen and Y. Wang, Optimal control problem for Cahn-Hilliard equations with state constraints. 
{\it J. Dyn. Control Syst.}  {\bf 21} (2015) 257-272.

\bibitem{zowe} J. Zowe and S. Kurcyusz, Regularity and stability for the mathematical programming problem in Banach spaces.
{\it Appl. Math. Optim.}  {\bf 5} (1979) 49-62.

\end{thebibliography}
\end{document}